\newtheorem{theorem}{Theorem}[section]
\newtheorem{lemma}[theorem]{Lemma}
\newtheorem{proposition}[theorem]{Proposition}
\theoremstyle{definition}
\numberwithin{equation}{section}
\begin{document}
\title{Curvature estimates on a parabolic flow of Fei-Guo-Phong}

\author{Yi Li}
\address{Shing-Tung Yau Center of Southeast University, Sipailou Campus,
Xuanwu District, Nanjing, 210000, China; and School of Mathematics,
Southeast University, Li Wenzheng Library of Jiulonglu Campus, Jiangning District, Nanjing, 211189, China}
\email{yilicms@gmail.com}



\maketitle

\begin{abstract} In \cite{FGP}, Fei, Guo and Phong established a criteria for the long-time existence of their parabolic flow from $11$-dimensional supergravity, which involves Riemannian curvatures ${\rm Rm}(g(t))$ and 4-forms $F(t)$. In this paper, we obtain a new criteria for the long-time existence of the same flow, which involves only Ricci curvatures ${\rm Ric}(g(t))$, $F(t)$, but as well as $\nabla_{g(t)}F(t)$.
\end{abstract}

\renewcommand{\labelenumi}{Case \theenumi.} %
\newtheoremstyle{mystyle}{3pt}{3pt}{\itshape}{}{\bfseries}{}{5mm}{} %
\theoremstyle{mystyle} \newtheorem{Thm}{Theorem} \theoremstyle{mystyle} %
\newtheorem{lem}{Lemma} \newtheoremstyle{citing}{3pt}{3pt}{\itshape}{}{%
\bfseries}{}{5mm}{\thmnote{#3}} \theoremstyle{citing} %
\newtheorem*{citedthm}{}

\tableofcontents

\section{Introduction}\label{section1}

In this paper, we consider a parabolic flow coming from 11D supergravity. Let $(M,g)$ be a $11$-dimensional closed Riemannian manifold with a closed $4$-form $F$. This parabiolic flow is given by
\begin{eqnarray}
\partial_{t}g(t)&=&-2\!\ {\rm Ric}_{g(t)}+F(t)\circ_{g(t)}F(t)-\frac{1}{3}|F(t)|^{2}_{g(t)}g(t),\label{1.1}\\
\partial_{t}F(t)&=&\Delta_{g(t), H}F(t)+\frac{1}{2}d\ast_{g(t)}(F(t)\wedge F(t)),\label{1.2}
\end{eqnarray}
where $\Delta_{g(t), H}=-(dd^{\ast}_{g(t)}+d^{\ast}_{g(t)}d)$ denotes the Hodge-Laplace operator associated to $g(t)$, $g(t)$ are smooth Riemannian metrics, and $F(t)$ are smooth closed $4$-forms. As in \cite{FGP}, 
\begin{eqnarray*}
(F(t)\circ_{g(t)}F(t))_{AB}&:=&\frac{1}{3!}g^{CC'}g^{DD'}g^{EE'}F_{ACDE}F_{BC'D'E}, \\
|F(t)|^{2}_{g(t)}&=&\frac{1}{4!}g^{AA'}g^{CC'}g^{DD'}g^{EE'}F_{ACDE}F_{A'C'D'E'},
\end{eqnarray*}
if $F(t)$ is locally written as $F(t)=\frac{1}{4!}F_{ABCD}dx^{A}\wedge dx^{B}\wedge dx^{C}
\wedge dx^{D}$.

\begin{theorem}\label{t1.1}{\bf (Fei-Guo-Phong \cite{FGP})} The flow (\ref{1.1}) -- (\ref{1.2}) exists in $[0,T_{\max})$, where $T_{\max}\in(0,+\infty]$ is the maximal time. 
\begin{itemize}
    
    \item[(a)] $F(t)$ remains closed for any $t\in[0,T_{\max})$. 
    
    \item[(b)] If $T_{\max}<\infty$, then
    \begin{equation}
    \limsup_{t\to T_{\max}}\sup_{M}\left(|{\rm Rm}_{g(t)}|_{g(t)}+|F(t)|_{g(t)}\right)
    =+\infty.\label{1.3}
    \end{equation}
\end{itemize}
\end{theorem}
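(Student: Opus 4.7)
Let $\omega(t):=dF(t)$. Differentiating (\ref{1.2}) in $t$ and using $d^{2}=0$,
\[
\partial_{t}\omega \;=\; d\,\partial_{t}F \;=\; d\,\Delta_{g(t),H}F + \tfrac{1}{2}\,d^{2}\!\ast_{g(t)}\!(F\wedge F) \;=\; d\,\Delta_{g(t),H}F.
\]
Since $\Delta_{g,H}=-(dd^{\ast}+d^{\ast}d)$, one has $d\,\Delta_{g,H}F = -dd^{\ast}dF = -dd^{\ast}\omega$. But $d\omega=d^{2}F=0$ identically, so $d^{\ast}d\omega=0$, and hence $-dd^{\ast}\omega = -(dd^{\ast}+d^{\ast}d)\omega = \Delta_{g(t),H}\omega$. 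Therefore $\omega$ satisfies the linear Hodge heat equation $\partial_{t}\omega = \Delta_{g(t),H}\omega$ on the closed manifold $M$ with $\omega(0)=dF_{0}=0$, and uniqueness forces $\omega\equiv 0$.

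\textbf{Part (b).} I would argue by contradiction along Hamilton's scheme. Suppose $T_{\max}<\infty$ and
\[
\sup_{[0,T_{\max})\times M}\bigl(|{\rm Rm}_{g(t)}|_{g(t)}+|F(t)|_{g(t)}\bigr) \;\le\; K \;<\; \infty.
\]
The goal is to construct a smooth limit $(g(T_{\max}),F(T_{\max}))$ from which the short-time existence theorem for (\ref{1.1})--(\ref{1.2}) continues the flow past $T_{\max}$, contradicting maximality. This has three steps.

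First, metric equivalence: from (\ref{1.1}) the standing hypothesis yields $|\partial_{t}g|_{g}\le C(K)$, whence $e^{-C(K)t}g(0)\le g(t)\le e^{C(K)t}g(0)$ on $[0,T_{\max})$, so tensor norms in different $g(t)$ are comparable. Second, Bernstein--Bando--Shi-type bounds: by induction on $k\ge 0$, establish
\[
\sup_{M}\bigl(|\nabla^{k}{\rm Rm}_{g(t)}|^{2}_{g(t)}+|\nabla^{k}F(t)|^{2}_{g(t)}\bigr)\;\le\; C_{k}\,t^{-k},\qquad t\in(0,T_{\max}).
\]
Third, these estimates yield uniform $C^{m}$-equicontinuity of $(g(t),F(t))$ as $t\uparrow T_{\max}$, giving the required smooth extension, and applying the short-time existence theorem to this limit produces the contradiction.

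The main obstacle is the inductive step in stage two, because the coupling in (\ref{1.1})--(\ref{1.2}) breaks the self-contained Shi structure of pure Ricci flow. The evolution of $|\nabla^{k}{\rm Rm}|^{2}$ generates schematic contributions $\nabla^{k+2}F \ast \nabla^{k}{\rm Rm}$ via the $F\circ F$ and $|F|^{2}g$ pieces of $\partial_{t}g$, while the evolution of $|\nabla^{k}F|^{2}$ contains $\nabla^{k}{\rm Rm}\ast\nabla^{k}F$ through the commutator $[\nabla,\Delta_{g,H}]$ and through the $t$-dependence of $\ast_{g(t)}$. I would therefore treat $({\rm Rm},F)$ as a single coupled hierarchy: compute $(\partial_{t}-\Delta)|\nabla^{k}{\rm Rm}|^{2}$ and $(\partial_{t}-\Delta)|\nabla^{k}F|^{2}$ side by side, and apply the maximum principle to a Bernstein combination
\[
\Phi_{k} \;:=\; a_{k}\bigl(|\nabla^{k}{\rm Rm}|^{2}+|\nabla^{k}F|^{2}\bigr) + t\bigl(|\nabla^{k+1}{\rm Rm}|^{2}+|\nabla^{k+1}F|^{2}\bigr),
\]
with $a_{k}$ chosen large in terms of the inductive hypothesis at order $k-1$, so that the dangerous $\nabla^{k+1}$ cross-terms produced by one equation are absorbed by the good $-|\nabla^{k+1}|^{2}$ terms produced by differentiating the other. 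Once this coupled induction closes, the base case $k=0$ being the assumed bound, the extension in step three follows routinely.
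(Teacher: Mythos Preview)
The paper does not contain a proof of Theorem~\ref{t1.1}: the result is quoted from Fei--Guo--Phong \cite{FGP} and used as a black box on which the paper's own contribution, Theorem~\ref{t1.2}, is built. So there is no argument here to compare your proposal against.

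That said, your sketch is essentially the standard route, and your proof of (a) is clean and correct. For (b), the three-step outline is right, but the specific Bernstein quantity $\Phi_{k}$ you wrote down does not close as stated. The evolution of $|\nabla^{k}{\rm Rm}|^{2}$ produces a term of the schematic form $\nabla^{k}{\rm Rm}\ast\nabla^{k+2}F\ast F$, because $F\circ F$ enters the metric equation undifferentiated and hence ${\rm Rm}$ sees two extra derivatives of $F$. To absorb this you need a good term $-|\nabla^{k+2}F|^{2}$, which only arises from differentiating $|\nabla^{k+1}F|^{2}$, not $|\nabla^{k}F|^{2}$. Likewise the $t|\nabla^{k+1}{\rm Rm}|^{2}$ piece of your $\Phi_{k}$ generates $\nabla^{k+3}F$, one order beyond what $t|\nabla^{k+1}F|^{2}$ can control. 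The fix is to offset the $F$-hierarchy by one derivative relative to the curvature hierarchy, i.e.\ to work with something like
\[
\Phi_{k}=a_{k}\bigl(|\nabla^{k}{\rm Rm}|^{2}+|\nabla^{k+1}F|^{2}\bigr)+t\bigl(|\nabla^{k+1}{\rm Rm}|^{2}+|\nabla^{k+2}F|^{2}\bigr).
\]
This offset is already visible in the paper's own Proposition~\ref{p2.1}, where the base-level quantity is $u=1+|{\rm Rm}|^{2}+|F|^{2}+|\nabla F|^{2}$ rather than $1+|{\rm Rm}|^{2}+|F|^{2}$. With that adjustment your inductive scheme does close.
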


In this paper, we improve the result (\ref{1.3}) in the following sense.

\begin{theorem}\label{t1.2} Consider the flow (\ref{1.1}) -- (\ref{1.2}) in $M\times[0,T_{\max})$, where $T_{\max}$ is as in Theorem \ref{t1.1}. If $T_{\max}<+\infty$, then
\begin{equation}
     \limsup_{t\to T_{\max}}\sup_{M}\left(|{\rm Ric}_{g(t)}|_{g(t)}+|F(t)|_{g(t)}+|\nabla_{g(t)}F(t)|_{g(t)}\right)
    =+\infty.\label{1.4}
\end{equation}
\end{theorem}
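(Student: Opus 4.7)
\noindent The plan is a contrapositive bootstrap. I will suppose that
$$K:=\sup_{M\times[0,T_{\max})}\left(|{\rm Ric}_{g(t)}|_{g(t)}+|F(t)|_{g(t)}+|\nabla_{g(t)}F(t)|_{g(t)}\right)<+\infty$$
and aim to deduce $\sup_{M\times[0,T_{\max})}|{\rm Rm}_{g(t)}|_{g(t)}<+\infty$, which together with the $|F|$-bound contradicts (\ref{1.3}) and hence the maximality of $T_{\max}$, thereby proving the theorem.

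First, I establish metric equivalence. From (\ref{1.1}) and the hypothesis, $|\partial_{t}g(t)|_{g(t)}\le C(K)$ pointwise, so a standard Gr\"onwall argument produces $\Lambda=\Lambda(K,T_{\max})$ with $\Lambda^{-1}g(0)\le g(t)\le \Lambda g(0)$ on $[0,T_{\max})$. All $g(t)$-norms of covariantly defined tensors are then mutually comparable, and the background Sobolev and Poincar\'e constants serve uniformly in $t$.

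The core of the proof is a coupled Bernstein estimate. Differentiating (\ref{1.1})--(\ref{1.2}) and commuting covariant derivatives with the heat operator yield, schematically,
\begin{align*}
(\partial_{t}-\Delta_{g(t)})|{\rm Rm}|^{2}&\le -2|\nabla{\rm Rm}|^{2}+C(K)|{\rm Rm}|^{2}+C(K)|\nabla^{2}F|\,|{\rm Rm}|+C(K),\\
(\partial_{t}-\Delta_{g(t)})|\nabla^{2}F|^{2}&\le -2|\nabla^{3}F|^{2}+C(K)(1+|{\rm Rm}|)|\nabla^{2}F|^{2}+C(K),
\end{align*}
where $|{\rm Rm}|$ enters the second line only through $[\nabla^{2},\Delta_{g(t)}]F$ and $(\partial_{t}\Gamma)\ast F$, which produce schematically ${\rm Rm}\ast\nabla^{2}F+\nabla{\rm Rm}\ast\nabla F$; the $\nabla{\rm Rm}\ast\nabla F$ contribution is absorbed against the $-2|\nabla{\rm Rm}|^{2}$ in the first line via Cauchy--Schwarz and the uniform bound on $|\nabla F|$. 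To bound $|\nabla^{2}F|$ in terms of $|{\rm Rm}|$ I introduce the Shi-type auxiliary
$$\Psi:=\left(A-|\nabla F|^{2}\right)|\nabla^{2}F|^{2},\qquad A:=2K^{2}+1,$$
so that $A-|\nabla F|^{2}\ge K^{2}+1>0$. A direct computation combining the two displayed inequalities, and using $(\partial_{t}-\Delta_{g(t)})|\nabla F|^{2}\le -2|\nabla^{2}F|^{2}+C(K)(1+|{\rm Rm}|)$, produces $(\partial_{t}-\Delta_{g(t)})\Psi\le C(K)(1+|{\rm Rm}|^{2})-|\nabla^{2}F|^{4}$ after absorbing the cross term $-2\nabla|\nabla F|^{2}\cdot\nabla|\nabla^{2}F|^{2}$ into $(A-|\nabla F|^{2})|\nabla^{3}F|^{2}$. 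The parabolic maximum principle then yields $|\nabla^{2}F|^{2}\le C(K)(1+\sup_{[0,t]\times M}|{\rm Rm}|)$. Substituting back and applying Young $|\nabla^{2}F|\,|{\rm Rm}|\le \epsilon|{\rm Rm}|^{2}+C_{\epsilon}|\nabla^{2}F|^{2}$ yields $(\partial_{t}-\Delta_{g(t)})|{\rm Rm}|^{2}\le C(K)(1+|{\rm Rm}|^{2})$, and a final application of the maximum principle produces the required $L^{\infty}$-bound on $|{\rm Rm}|$.

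The chief obstacle is the circular coupling between ${\rm Rm}$ and $\nabla^{2}F$: ${\rm Rm}$ enters the $\nabla^{2}F$-evolution linearly through curvature commutators, while $\nabla^{2}F$ enters the ${\rm Rm}$-evolution linearly through $\nabla^{2}(F\circ F)$ and $\nabla^{2}(|F|^{2}g)$, so neither inequality can be closed in isolation. The Bernstein multiplier $A-|\nabla F|^{2}$---whose positivity is guaranteed precisely by the hypothesis on $|\nabla F|$---breaks this loop by rendering the offending $|{\rm Rm}||\nabla^{2}F|^{2}$ defect subcritical relative to the dissipation $|\nabla^{3}F|^{2}$, and verifying that all constants throughout the bootstrap depend only on $K$ (and not implicitly on $\sup|{\rm Rm}|$) is the technical core of the argument.
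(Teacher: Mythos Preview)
Your first displayed evolution inequality is wrong, and the error is fatal. Under any flow $\partial_{t}g=-2\,{\rm Ric}+\alpha$, the heat-type evolution of the Riemann tensor reads schematically $\Box\,{\rm Rm}={\rm Rm}\ast{\rm Rm}+\nabla^{2}\alpha+{\rm Rm}\ast\alpha$; the quadratic reaction ${\rm Rm}\ast{\rm Rm}$ arises when one rewrites the $\nabla^{2}{\rm Ric}$ terms as $\Delta{\rm Rm}+{\rm Rm}\ast{\rm Rm}$ via the second Bianchi identity, and it involves the \emph{full} Riemann tensor, not Ricci alone. Consequently (cf.\ (\ref{2.7}))
\[
\Box|{\rm Rm}|^{2}\le -2|\nabla{\rm Rm}|^{2}+C\,|{\rm Rm}|^{3}+C(K)\,|{\rm Rm}|\,|\nabla^{2}F|+C(K)|{\rm Rm}|^{2}+C(K),
\]
and the cubic term $C|{\rm Rm}|^{3}$ cannot be replaced by $C(K)|{\rm Rm}|^{2}$. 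With this correction your final maximum-principle step only yields the scalar ODE $y'\le Cy^{3/2}+C(K)$ for $y=\sup_{M}|{\rm Rm}|^{2}$, which is perfectly consistent with finite-time blow-up and gives no contradiction. This is precisely the well-known obstruction to deducing a Riemann bound from a Ricci bound by pointwise arguments, already in pure Ricci flow; it is why the paper follows the $L^{p}$-integral scheme of Kotschwar--Munteanu--Wang, working with $\partial_{t}|{\rm Rm}|^{2}$ directly (equation (\ref{2.8})) so that the dangerous contribution appears as $\nabla^{2}{\rm Ric}\ast{\rm Rm}$ and can be integrated by parts against the good $-|\nabla{\rm Rm}|^{2}$ and $-|\nabla{\rm Ric}|^{2}$ terms.

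There is a second gap of the same flavour in your $|\nabla^{2}F|^{2}$ inequality. Commuting $\nabla^{2}$ with $\Delta$ (and computing $\nabla(\partial_{t}\Gamma)\ast F$) produces, besides the $\nabla{\rm Rm}\ast\nabla F$ you mention, terms of type $\nabla^{2}{\rm Rm}\ast F$ and $\nabla^{2}{\rm Ric}\ast F$. After pairing with $\nabla^{2}F$ these contribute $C\,L\,|\nabla^{2}{\rm Rm}|\,|\nabla^{2}F|$ to $\Box|\nabla^{2}F|^{2}$, and there is no $-|\nabla^{2}{\rm Rm}|^{2}$ available anywhere in your system to absorb them pointwise. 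So the Bernstein multiplier $(A-|\nabla F|^{2})$ does not close the loop either; the coupling you identify is real, but it cannot be broken by a pointwise Shi-type trick.
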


${}$

{\bf Acknowledgments.} The author wishes to thank Yuan Yuan for helpful discussion on this paper. The author also thank Teng Fei, Bin Guo, and D. H. Phong for their useful suggestions and comments.

\subsection{Notions and conventions}\label{subsection1.1}

If $g$ is a Riemannian metric on the manifold $M$, we write
${\rm Rm}_{g}, {\rm Ric}_{g}, R_{g}, \nabla_{g}, \nabla^{2}_{g},
\Delta_{g}, dV_{g}, {\rm div}_{g}, {\rm tr}_{g(t)}$, $\langle\cdot,\cdot\rangle_{g}$, and $|\cdot|_{g}$ the Riemann curvature, Ricci
curvature, scalar curvature, Levi-Civita connection, Hessian,
Beltrami-Laplace operator, volume form, divergence, trace, inner product, and norm of $g$,
respectively. In a local coordinate system, we write
\begin{equation*}
\Gamma^{k}_{ij}=\frac{1}{2}g^{k\ell}
\left(\frac{\partial}{\partial x^{i}}g_{\ell j}+\frac{\partial}{\partial x^{j}}
g_{i\ell}-\frac{\partial}{\partial x^{\ell}}g_{ij}\right).
\end{equation*}
The Riemann curvature ${\rm Rm}_{g}(X,Y)Z=[(\nabla_{g})_{X}, (\nabla_{g})_{Y}]Z-(\nabla_{g})_{[X,Y]}Z$ is locally given by
\begin{equation*}
R_{ijk}{}^{\ell}\frac{\partial}{\partial x^{\ell}}\equiv R^{\ell}_{ijk}\frac{\partial}{\partial x^{\ell}}:=
{\rm Rm}_{g}\left(\frac{\partial}{\partial x^{i}},\frac{\partial}{\partial x^{j}}
\right)\frac{\partial}{\partial x^{k}}.
\end{equation*}
Contracting indices we get $R_{ij}:=g^{k\ell}R_{ik\ell j}$ and $R:=
g^{ij}R_{ij}$.

We use the Einstein summation and raise/lower indices for tensor fields. The
Ricci identity for tensor $T=(T_{k_{1}\cdots k_{r}}{}^{\ell_{1}\cdots
\ell_{s}})$ is
$$
[\nabla_{i},\nabla_{j}]T_{k_{1}\cdots k_{r}}{}^{\ell_{1}
\cdots\ell_{s}} \ \ = \ \ -\sum_{1\leq h\leq r}R_{ijk_{h}}{}^{p}
T_{k_{1}\cdots k_{h-1}p k_{h+1}\cdots k_{r}}{}^{\ell_{1}\cdots\ell_{s}}
$$
$$
+ \ \sum_{1\leq h\leq s}R_{ijp}{}^{\ell_{h}}T_{k_{1}\cdots k_{r}}{}^{\ell_{1}
\cdots\ell_{h-1}p\ell_{h+1}\cdots\ell_{s}}
$$
$$
=-\sum_{1\leq h\leq r}R_{ijk_{h}p}T_{k_{1}\cdots k_{h-1}}{}^{p}{}_{k_{h+1}
\cdots k_{r}}{}^{\ell_{1}\cdots\ell_{s}}+\sum_{1\leq h\leq s}R_{ij}{}^{p}{}_{\ell_{h}}T_{k_{1}\cdots k_{r}}{}^{\ell_{1}
\cdots\ell_{h-1}}{}_{p}{}^{\ell_{h+1}\cdots\ell_{s}}.
$$
For any two tensor fields $A$ and $B$, we denote by $A\ast B$ any quantity
obtained from the tensor product $A\otimes B$ by one or more of these
operations:

\begin{itemize}

\item[(a)] summation over pairs of matching upper and lower indices,

\item[(b)] multiplication by constants depending only on the dimension
of $\mathcal{M}$ and the ranks of $A$ and $B$.

\end{itemize}
We also denote by $A^{\ast k}$ the $k$-fold product $A\ast \cdots\ast A$.

If $g=g(t)$ is a family of Riemannian metric indexed time variable $t$,
we always omit time $t$ in all components of curvatures, operators, and
other quantities induced from $g(t)$. The time derivative is denoted by $
\partial_{t}$ or $\partial/\partial t$. For the parabolic operator $
\partial_{t}-\Delta_{g(t)}$, we use the symbol $\Box_{g(t)}$.

For a given parabolic PDE, a uniform constant $C$ is a {\it positive} constant depending only on the dimension of $M$ as well as the given data, but not on time $t$. Different
uniform constants may be labeled by $C_{1}, C_{2},\cdots$, according to
the context.

If $P$ and $Q$ are two quantities (may depend on time $t$) satisfying $P
\leq C Q$ for some uniform constant $C$, then we set $P\lesssim Q$. Similarly, we can define $P\approx Q$ if $P\lesssim Q$ and
$Q\lesssim P$; that is, $C^{-1}Q\leq P\leq CQ$ for some uniform
constant $C$.

\section{Evolution equations}\label{section2}

In this section, for further study, we assume that $(g(t), F(t))_{t\in[0,T]}$ is a solution to (\ref{1.1}) -- (\ref{1.2}) on a {\it complete} Riemannian manifold $(M, g)$ 
with a closed $4$-form $F$.

${}$

For convenience, we introduce 
\begin{equation}
    \alpha(t):=F(t)\circ_{g(t)}F(t)-\frac{1}{3}|F(t)|^{2}_{g(t)}g(t).\label{2.1}
\end{equation}
Then the equation (\ref{1.1}) becomes
\begin{equation}
    \partial_{t}g(t)=-2\!\ {\rm Ric}_{g(t)}+\alpha(t).\label{2.2}
\end{equation}
It is not hard to see that our $\alpha(t)$ does not satisfy the equation (2.2) in \cite{LY} (because the evolution equation for $\Box_{g(t)}\alpha(t)$ will contain a term involving $\nabla_{g(t)}F(t)$), so that the main result in \cite{LY} can not be applied to (\ref{1.1}) -- (\ref{1.2}). However many general equations in Section 2 of \cite{LY} can be used in our setting. 

${}$

From now on, we assume
\begin{equation}
    |{\rm Ric}_{g(t)}|_{g(t)}\leq K, \ \ \ |F(t)|_{g(t)}\leq L, \ \ \ \text{in} \ 
    M\times[0,T].\label{2.3}
\end{equation}
Firstly, we rewrite (\ref{1.1}) and (\ref{1.2}) as
\begin{eqnarray}
\partial_{t}g(t)&=&-2\!\ {\rm Ric}_{g(t)}+F(t)\ast F(t),\label{2.4}\\
\partial_{t}F(t)&=&\Delta_{g(t)}F(t)+\nabla_{g(t)} F(t)\ast F(t)+
F(t)\ast F(t).\label{2.5}
\end{eqnarray}
We can deduce from \cite{FGP, LY} the following evolution equations or inequalities:
\begin{eqnarray}
|\nabla{\rm Ric}|^{2}&\leq&-\frac{1}{2}\Box|{\rm Ric}|^{2}+C|{\rm Ric}|^{2}|{\rm Rm}|
+C|{\rm Ric}|^{2}|F|^{2}\nonumber\\
&&+ \ {\rm Ric}\ast\left(\nabla^{2}F\ast F+\nabla F\ast\nabla F\right)\nonumber\\
&\leq&-\frac{1}{2}\Box|{\rm Ric}|^{2}+CK^{2}|{\rm Rm}|
+CK^{2}L^{2}+ {\rm Ric}\ast\left(\nabla^{2}F\ast F+\nabla F\ast\nabla F\right),\label{2.6}
\end{eqnarray}
\begin{eqnarray}
|\nabla{\rm Rm}|^{2}&\leq&-\frac{1}{2}\Box|{\rm Rm}|^{2}
+C|{\rm Rm}|^{3}+{\rm Rm}\ast{\rm Rm}\ast F\ast F\nonumber\\
&&+ \ {\rm Rm}\ast\left(\nabla^{2}F\ast F+\nabla F\ast\nabla F\right)\nonumber\\
&\leq&-\frac{1}{2}\Box|{\rm Rm}|^{2}+C|{\rm Rm}|^{3}+CL^{2}|{\rm Rm}|^{2}
+{\rm Rm}\ast\left(\nabla^{2}F\ast F+\nabla F\ast\nabla F\right),\label{2.7}
\end{eqnarray}
\begin{eqnarray}
\partial_{t}|{\rm Rm}|^{2}&=&\nabla^{2}{\rm Ric}\ast{\rm Rm}
+{\rm Ric}\ast{\rm Rm}\ast{\rm Rm}\nonumber\\
&&+ \ \left({\rm Rm}\ast{\rm Rm}\ast F\ast F+{\rm Rm}\ast\nabla^{2}F\ast F
+{\rm Rm}\ast\nabla F\ast\nabla F\right),\label{2.8}
\end{eqnarray}
and
\begin{eqnarray}
|\nabla F|^{2}&\leq&-\frac{1}{2}\Box|F|^{2}+C|{\rm Rm}||F|^{2}+C|\nabla F||F|^{2},\label{2.9}\\
\partial_{t}dV_{t}&\leq&C(K+L^{2})dV_{t}.\label{2.10}
\end{eqnarray}
According to the Cauchy-Schwarz inequality, we can simplify (\ref{2.9}) into
\begin{equation}
    |\nabla F|^{2}\leq-\Box|F|^{2}+CL^{2}|{\rm Rm}|+CL^{4}.\label{2.11}
\end{equation}

\begin{proposition}\label{p2.1} Under the assumption (\ref{2.3}) we have
\begin{equation}
    \Box u\leq Cu^{2}\label{2.12}
    \end{equation}
    for some positive constants $C$, where $u:=1+|{\rm Rm}|^{2}+|F|^{2}+|\nabla F|^{2}$.
\end{proposition}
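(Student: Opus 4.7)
The plan is to combine the evolution inequalities~(\ref{2.7}) for $|{\rm Rm}|^{2}$ and~(\ref{2.11}) for $|F|^{2}$ with a third, as-yet-underived inequality for $\Box|\nabla F|^{2}$, and then to use Cauchy--Schwarz to absorb every cross term. The structural input is that $u\geq 1$, the pointwise inequalities $|{\rm Rm}|,|F|,|\nabla F|\leq u^{1/2}$, and the standing bounds $|{\rm Ric}|\leq K$, $|F|\leq L$. With these, any pointwise product of total degree at most four in $\{{\rm Rm},F,\nabla F\}$---allowing numerical coefficients that depend on $K$ and $L$---is bounded by $Cu^{2}$.

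The missing ingredient is an inequality of the shape $\Box|\nabla F|^{2}\leq -2|\nabla^{2}F|^{2}+(\text{lower order})$. To derive it, I compute $\partial_{t}\nabla F=\nabla(\partial_{t}F)+(\partial_{t}\Gamma)\ast F$, where~(\ref{2.5}) gives $\nabla\partial_{t}F=\nabla\Delta F+\nabla^{2}F\ast F+\nabla F\ast\nabla F+\nabla F\ast F$ and~(\ref{2.4}) gives $\partial_{t}\Gamma=\nabla{\rm Ric}+F\ast\nabla F$. Commuting $\nabla$ past $\Delta$ via the Ricci identity yields $\nabla\Delta F-\Delta\nabla F={\rm Rm}\ast\nabla F+\nabla{\rm Rm}\ast F$, and pairing the resulting schematic expression for $\Box\nabla F$ with $\nabla F$---accounting also for the $\partial_{t}g^{-1}$-contributions in $|\nabla F|^{2}$, which produce additional ${\rm Ric}\ast|\nabla F|^{2}$ and $F\ast F\ast|\nabla F|^{2}$ terms---leads to
\[
\Box|\nabla F|^{2}\leq -2|\nabla^{2}F|^{2}+\nabla^{2}F\ast F\ast\nabla F+\nabla{\rm Rm}\ast F\ast\nabla F+\nabla{\rm Ric}\ast F\ast\nabla F+(\text{degree-}{\leq}4\text{ terms}).
\]

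Adding this to~(\ref{2.7}) and~(\ref{2.11}) gives $\Box u\leq -2|\nabla{\rm Rm}|^{2}-2|\nabla^{2}F|^{2}-|\nabla F|^{2}+(\text{cross/lower order})$. Each dangerous cross term of the form ${\rm Rm}\ast\nabla^{2}F\ast F$, $\nabla{\rm Rm}\ast F\ast\nabla F$, or $\nabla{\rm Ric}\ast F\ast\nabla F$ is handled by Cauchy--Schwarz: using $|F|\leq L$ together with $|\nabla{\rm Ric}|\leq C|\nabla{\rm Rm}|$, each is bounded by $\epsilon(|\nabla^{2}F|^{2}+|\nabla{\rm Rm}|^{2})+C_{\epsilon}L^{2}(|{\rm Rm}|^{2}+|\nabla F|^{2})$. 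Choosing $\epsilon$ small enough, the $\epsilon$-contributions are absorbed by $-2|\nabla^{2}F|^{2}-2|\nabla{\rm Rm}|^{2}$, and the surviving terms---including the $|{\rm Rm}|^{3}$ from~(\ref{2.7}) and $L^{2}|{\rm Rm}|$ from~(\ref{2.11})---are bounded pointwise by $Cu^{2}$ via, e.g., $|{\rm Rm}|^{3}\leq u^{3/2}\leq u^{2}$ and $L^{2}|{\rm Rm}|\leq Cu$. The main obstacle is the first step: carefully carrying through the evolution computation for $|\nabla F|^{2}$ under the simultaneously varying metric and $4$-form so that the coefficient and sign of the $-2|\nabla^{2}F|^{2}$ term come out correctly---once that is in hand, the remainder of the argument is essentially Cauchy--Schwarz bookkeeping.
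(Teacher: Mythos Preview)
Your proposal is correct and follows essentially the same route as the paper: both derive the evolution inequality for $|\nabla F|^{2}$ by commuting $\nabla$ past $\Delta$ and tracking the $\partial_{t}\Gamma$- and $\partial_{t}g^{-1}$-contributions, then add the three evolution inequalities and use Cauchy--Schwarz to absorb the $\nabla^{2}F$- and $\nabla{\rm Rm}$-cross terms into $-2|\nabla^{2}F|^{2}-2|\nabla{\rm Rm}|^{2}$. The only cosmetic difference is that the paper bounds the residual terms via $a\leq a^{2}+1$ whereas you invoke the equivalent degree-counting observation $u^{k/2}\leq u^{2}$ for $k\leq 4$ using $u\geq 1$.
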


\begin{proof} From (\ref{2.7}) we have
\begin{eqnarray*}
\partial_{t}|{\rm Rm}|^{2}&\leq&\Delta|{\rm Rm}|^{2}-2|\nabla{\rm Rm}|^{2}
+C|{\rm Rm}|^{3}+C|F|^{2}|{\rm Rm}|^{2}\\
&&+ \ {\rm Rm}\ast(\nabla^{2}F\ast F+\nabla F\ast\nabla F)\\
&\leq&\Delta|{\rm Rm}|^{2}-2|\nabla{\rm Rm}|^{2}+C|{\rm Rm}|^{3}\\
&&+ \ C|F|^{2}|{\rm Rm}|^{2}
+C|{\rm Rm}||F||\nabla^{2}F|+C|{\rm Rm}||\nabla F|^{2}
\end{eqnarray*}
so that
\begin{eqnarray*}
\partial_{t}(|{\rm Rm}|^{2}+|F|^{2})&\leq&\Delta|{\rm Rm}|^{2}-2|\nabla{\rm Rm}|^{2}+C|{\rm Rm}|^{3}\\
&&+ \ C|F|^{2}|{\rm Rm}|^{2}
+C|{\rm Rm}||F||\nabla^{2}F|+C|{\rm Rm}||\nabla F|^{2}\\
&&+ \ \bigg(\Delta|F|^{2}-|\nabla F|^{2}+C|F|^{2}|{\rm Rm}|+C|F|^{4}\bigg)\\
&\leq&\Delta(|{\rm Rm}|^{2}+|F|^{2})-2|\nabla{\rm Rm}|^{2}-|\nabla F|^{2}\\
&&+ \ C|F||{\rm Rm}||\nabla^{2}F|+C|{\rm Rm}||\nabla F|^{2}\\
&&+ \ C|{\rm Rm}|^{3}+C|F|^{2}|{\rm Rm}|^{2}
+C|F|^{2}|{\rm Rm}|+C|F|^{4}.
\end{eqnarray*}
In order to eliminate $|\nabla^{2}F|$, we need the evolution equation to $|\nabla F|^{2}$. Compute
\begin{eqnarray*}
\partial_{t}|\nabla F|^{2}&=&2\langle\partial_{t}\nabla F, \nabla F\rangle+\partial_{t}g\ast\nabla F\ast\nabla F\\
&=&2\langle\nabla\partial_{t}F+\partial_{t}\Gamma\ast F,\nabla F\rangle+({\rm Ric}+F\ast F)\ast\nabla F\ast\nabla F\\
&=&2\langle\nabla(\Delta F+\nabla F\ast F+F\ast F)+\nabla({\rm Ric}+F\ast F)\ast F,\nabla F\rangle\\
&&+ \ ({\rm Ric}+F\ast F)\ast\nabla F\ast\nabla F\\
&=&2\langle\nabla\Delta F,\nabla F\rangle+F\ast\nabla F\ast\nabla^{2}F+\nabla F\ast\nabla F\ast\nabla F+F\ast\nabla F\ast\nabla F\\
&&+ \ \nabla{\rm Ric}\ast\nabla F+{\rm Ric}\ast\nabla F\ast\nabla F
+F\ast F\ast\nabla F\ast\nabla F.
\end{eqnarray*}
Using
$$
\Delta\nabla F=\nabla\Delta F+{\rm Rm}\ast\nabla F+\nabla{\rm Rm}\ast F
$$
and
$$
\Delta|\nabla F|^{2}=2\langle\Delta\nabla F,\nabla F\rangle+2|\nabla|\nabla F||^{2}\leq2\langle\Delta\nabla F, \nabla F\rangle+2|\nabla^{2}F|^{2}
$$
we obtain
\begin{eqnarray*}
\partial_{t}|\nabla F|^{2}&\leq&\Delta|\nabla F|^{2}-2|\nabla^{2}F|^{2}
+C|{\rm Rm}||\nabla F|^{2}+C|\nabla{\rm Rm}||F||\nabla F|\\
&&+ \ C|F||\nabla F||\nabla^{2}F|+C|\nabla F|^{3}
+C|F||\nabla F|^{2}\\
&&+ \ C|\nabla{\rm Ric}||\nabla F|+C|{\rm Ric}||\nabla F|^{2}+C|F|^{2}|\nabla F|^{2}.
\end{eqnarray*}
Therefore
\begin{eqnarray*}
\Box(|{\rm Rm}|^{2}+|F|^{2}+|\nabla F|^{2})
&\leq&-2|\nabla{\rm Rm}|^{2}+C|\nabla{\rm Rm}||F||\nabla F|+C|\nabla{\rm Ric}||\nabla F|\\
&&- \ 2|\nabla^{2}F|^{2}+C|F||{\rm Rm}||\nabla^{2}F|+C|F||\nabla F||\nabla^{2}F|\\
&&+ \ C|{\rm Rm}||\nabla F|^{2}+C|{\rm Rm}|^{3}+C|F|^{2}|{\rm Rm}|^{2}\\
&&+ \ C|F|^{2}|{\rm Rm}|+C|F|^{4}+C|\nabla F|^{3}+C|F||\nabla F|^{2}\\
&&+ \ C|{\rm Ric}||\nabla F|^{2}+C|F|^{2}|\nabla F|^{2}.
\end{eqnarray*}
By using the inequality $a\leq a^{2}+1$ for any $a\geq0$, we obtain (\ref{2.12}).
\end{proof}

Now we consider the quantity
\begin{equation}
    \frac{d}{dt}\left(\int|{\rm Rm}|^{p}\phi^{2p}dV_{t}\right), \ \ \ \int:=\int_{M}, \ \ \ p\geq5,\label{2.13}
\end{equation}
where $\phi = \phi(x)$ is a cutoff function with compact support $\Omega$ inside $M$. From (\ref{2.8}), (\ref{2.10}) and the identity $\partial_{t}|Rm|^{p}=\frac{p}{2}|{\rm Rm}|^{p-2}\partial_{t}|{\rm Rm}|^{2}$, we get
$$
\frac{d}{dt}\left(\int|{\rm Rm}|^{p}\phi^{2p}dV_{t}\right) \ \ = \ \ \int\left(\partial_{t}|{\rm Rm}|^{p}\right)\phi^{2p}dV_{t}+\int|{\rm Rm}|^{p}\phi^{2p}(\partial_{t}dV_{t})
$$
$$
\leq \ C(K+L^{2})\int|{\rm Rm}|^{p}\phi^{2p}dV_{t}+\frac{p}{2}\int |{\rm Rm}|^{p-2}
\phi^{2p}\bigg[\nabla^{2}{\rm Ric}\ast{\rm Rm}+{\rm Ric}\ast{\rm Rm}\ast{\rm Rm}
$$
$$
+ \ ({\rm Rm}\ast{\rm Rm}\ast F\ast F+{\rm Rm}\ast\nabla^{2}F\ast F+{\rm Rm}
\ast\nabla F\ast\nabla F)\bigg]dV_{t}
$$
$$
\leq \ C(K+L^{2})\int|{\rm Rm}|^{p}\phi^{2p}dV_{t}+C\int(\nabla F\ast\nabla F\ast{\rm Rm})|{\rm Rm}|^{p-2}\phi^{2p}dV_{t}
$$
$$
+ \ C\int(\nabla^{2}F\ast{\rm Rm}\ast F)|{\rm Rm}|^{p-2}\phi^{2p}dV_{t}+C\int(\nabla^{2}{\rm Ric}\ast{\rm Rm})|{\rm Rm}|^{p-2}\phi^{2p}dV_{t}.
$$
As in \cite{LY}, we compute, by Kato's inequality,
$$
C\int(\nabla^{2}{\rm Ric}\ast{\rm Rm})|{\rm Rm}|^{p-2}\phi^{2p}dV_{t} \ \ = \ \ C\int\nabla{\rm Ric}\ast\bigg(\nabla{\rm Rm}\ast|{\rm Rm}|^{p-2}\ast\phi^{2p}
$$
$$
+ \ {\rm Rm}\ast\nabla|{\rm Rm}|^{p-2}\ast\phi^{2p}+{\rm Rm}\ast|{\rm Rm}|^{p-2}\ast\nabla\phi^{2p}\bigg)dV_{t}
$$
$$
\leq \ C\int|\nabla{\rm Ric}||\nabla{\rm Rm}||{\rm Rm}|^{p-2}\phi^{2p}dV_{t}
+C\int|\nabla{\rm Ric}||\nabla\phi||{\rm Rm}|^{p-1}\phi^{2p-1}dV_{t}
$$
$$
\leq\int|\nabla{\rm Ric}|^{2}|{\rm Rm}|^{p-1}\phi^{2p}dV_{t}
+C\int|\nabla{\rm Rm}|^{2}|{\rm Rm}|^{p-3}\phi^{2p}dV_{t}
$$
$$
+ \ C\int|\nabla\phi|^{2}|{\rm Rm}|^{p-1}\phi^{2p-2}dV_{t}
$$
and
$$
C\int(\nabla^{2}F\ast F\ast{\rm Rm})|{\rm Rm}|^{p-2}\phi^{2p}dV_{t} \  \ = \ \ C\int\nabla F\ast\bigg(\nabla F\ast{\rm Rm}\ast|{\rm Rm}|^{p-2}\ast\phi^{2p}
$$
$$
+ \ F\ast\nabla{\rm Rm}\ast|{\rm Rm}|^{p-2}\ast\phi^{2p}+F\ast{\rm Rm}\ast\nabla|{\rm Rm}|^{p-2}\ast\phi^{2p}
$$
$$
+ \ F\ast{\rm Rm}\ast|{\rm Rm}|^{p-2}\ast\nabla\phi^{2p}\bigg)dV_{t} \ \leq \ C\int|\nabla F|^{2}|{\rm Rm}|^{p-1}\phi^{2p}dV_{t}
$$
$$
+ \ CL\int|\nabla F||\nabla{\rm Rm}||{\rm Rm}|^{p-2}\phi^{2p}dV_{t}+CL\int|\nabla F||\nabla\phi||{\rm Rm}|^{p-1}\phi^{2p-1}dV_{t}
$$
$$
\leq \ C(1+L^{2})\int|\nabla F|^{2}|{\rm Rm}|^{p-1}\phi^{2p}dV_{t}
+C\int|\nabla{\rm Rm}|^{2}|{\rm Rm}|^{p-3}\phi^{2p}dV_{t}
$$
$$
+ \ C\int|\nabla\phi|^{2}|{\rm Rm}|^{p-1}\phi^{2p-2}dV_{t}.
$$
Define {\bf bad terms} by
\begin{eqnarray*}
B_{1}&:=&\int|\nabla{\rm Ric}|^{2}|{\rm Rm}|^{p-1}\phi^{2p}dV_{t}\\
B_{2}&:=&\int|\nabla{\rm Rm}|^{2}|{\rm Rm}|^{p-3}\phi^{2p}dV_{t}\\
B_{3}&:=&\int|\nabla F|^{2}|{\rm Rm}|^{p-1}\phi^{2p}dV_{t}\\
B_{4}&:=&\int|\nabla F|^{2}|{\rm Rm}|^{p-3}\phi^{2p}dV_{t}
\end{eqnarray*}
and {\bf good terms} by
\begin{eqnarray*}
A_{1}&:=&\int|{\rm Rm}|^{p}\phi^{2p}dV_{t}\\
A_{2}&:=&\int|{\rm Rm}|^{p-1}\phi^{2p}dV_{t}\\
A_{3}&:=&\int|{\rm Rm}|^{p-1}|\nabla\phi|^{2}\phi^{2p-1}dV_{t}\\
A_{4}&:=&\int|{\rm Rm}|^{p-1}|\nabla\phi|^{2}\phi^{2p-2}dV_{t}.
\end{eqnarray*}
Hence, the above computation shows
\begin{equation}
    \frac{d}{dt}A_{1}\leq B_{1}+CB_{2}+C(1+L^{2})B_{3}
    +CA_{4}+C(K+L^{2})A_{1}.\label{2.14}
\end{equation}

\subsection{The estimate for $\boldsymbol{B_{1}}$}

Using (\ref{2.6}) yields
\begin{eqnarray}
B_{1}&=&\int|\nabla{\rm Ric}|^{2}|{\rm Rm}|^{p-1}\phi^{2p}dV_{t}\nonumber\\
&\leq&\int|{\rm Rm}|^{p-1}\phi^{2p}
\bigg[-\frac{1}{2}\Box|{\rm Ric}|^{2}+CK^{2}|{\rm Rm}|+CK^{2}L^{2}\nonumber\\
&&+ \ {\rm Ric}\ast(\nabla^{2}F\ast F+\nabla F\ast\nabla F)\bigg]dV_{t}\label{2.15}\\
&\leq&\frac{1}{2}\int\left[(\Delta-\partial_{t})|{\rm Ric}|^{2}\right]|{\rm Rm}|^{p-1}
\phi^{2p}dV_{t}+CK^{2} A_{1}+CK^{2}L^{2}A_{2}\nonumber\\
&&+ \ \int({\rm Ric}\ast\nabla^{2}F\ast F
+{\rm Ric}\ast\nabla F\ast\nabla F)|{\rm Rm}|^{p-1}\phi^{2p}dV_{t}.\nonumber
\end{eqnarray}
The last two integrals in (\ref{2.15}) can be estimated by
$$
\int({\rm Ric}\ast\nabla^{2}F\ast F)|{\rm Rm}|^{2p-1}\phi^{2p}dV_{t} = \int\nabla F\ast\nabla({\rm Ric}\ast F\ast|{\rm Rm}|^{p-1}\ast\phi^{2p})dV_{t}
$$
$$
= \ \int\nabla F\ast\bigg[\nabla{\rm Ric}\ast F\ast|{\rm Rm}|^{p-1}\ast\phi^{2p}
+{\rm Ric}\ast\nabla F\ast|{\rm Rm}|^{p-1}\ast\phi^{2p}
$$
$$
+ \ {\rm Ric}\ast F\ast\nabla|{\rm Rm}|^{p-1}\ast\phi^{2p}
+{\rm Ric}\ast F\ast|{\rm Rm}|^{p-1}\ast\nabla\phi^{2p}\bigg]dV_{t}
$$
$$
\leq \ CL\int|\nabla F||\nabla{\rm Ric}||{\rm Rm}|^{p-1}\phi^{2p}dV_{t}
+CK\int|\nabla F|^{2}|{\rm Rm}|^{p-1}\phi^{2p}dV_{t}
$$
$$
+ \ CLK\int|\nabla F||\nabla{\rm Rm}||{\rm Rm}|^{p-2}\phi^{2p}dV_{t}
+CLK\int|\nabla F||\nabla\phi||{\rm Rm}|^{p-1}\phi^{2p-1}dV_{t}
$$
$$
\leq \ \frac{1}{100}B_{1}+C(K+L^{2})B_{3}+CK^{2}B_{2}+CK^{2}A_{4}
$$
and
$$
\int({\rm Ric}\ast\nabla F\ast\nabla F)|{\rm Rm}|^{p-1}\phi^{2p}dV_{t}
\leq CK B_{3}.
$$
Therefore
\begin{eqnarray}
\int({\rm Ric}\ast\nabla^{2}F\ast F
+{\rm Ric}\ast\nabla F\ast\nabla F)|{\rm Rm}|^{2}\phi^{6}dV_{t}\nonumber
\end{eqnarray}
\begin{eqnarray}
&\leq&\frac{1}{100}B_{1}+C(K+L^{2})B_{3}+CK^{2}B_{2}+CK^{2}A_{4}.\label{2.16}
\end{eqnarray}
Next we compute the first integral in (\ref{2.15}). Using (\ref{2.10}) we obtain
$$
\frac{1}{2}\int[(\Delta-\partial_{t})|{\rm Ric}|^{2}]|{\rm Rm}|^{p-1}\phi^{2p}dV_{t} \ \ = \ \ \frac{1}{2}\int(\Delta|{\rm Ric}|^{2})|{\rm Rm}|^{p-1}\phi^{2p}dV_{t}
$$
$$
- \ \frac{1}{2}\int\bigg[\partial_{t}(|{\rm Ric}|^{2}|{\rm Rm}|^{p-1}\phi^{2p}dV_{t})
-|{\rm Ric}|^{2}(\partial_{t}|{\rm Rm}|^{p-1})\phi^{2p}dV_{t}
$$
$$
- \ |{\rm Ric}|^{2}|{\rm Rm}|^{p-1}\phi^{2p}(\partial_{t}dV_{t})\bigg]
$$
\begin{equation}
\leq-\frac{1}{2}
\left[\int\langle\nabla|{\rm Ric}|^{2}, \nabla|{\rm Rm}|^{p-1}
\rangle\phi^{2p}dV_{t}+\langle\nabla|{\rm Ric}|^{2}, \nabla\phi^{2p}\rangle|{\rm Rm}|^{p-1}
dV_{t}\right]\label{2.17}
\end{equation}
$$
+ \ CK^{2}(K+L^{2})A_{2}-\frac{1}{2}\frac{d}{dt}\left(\int|{\rm Ric}|^{2}|{\rm Rm}|^{p-1}
\phi^{2p}dV_{t}\right)
$$
$$
+ \ \frac{1}{2}\int|{\rm Ric}|^{2}(\partial_{t}|{\rm Rm}|^{p-1})\phi^{2p}dV_{t}.
$$
We first compute
$$
-\frac{1}{2}\int\langle\nabla|{\rm Ric}|^{2},\nabla|{\rm Rm}|^{p-1}\rangle\phi^{2p}dV_{t}
$$
\begin{equation}
= \ \ \int{\rm Ric}\ast\nabla{\rm Ric}\ast|{\rm Rm}|^{p-3}\ast{\rm Rm}
\ast\nabla{\rm Rm}\ast\phi^{2p}dV_{t}\label{2.18}
\end{equation}
$$
\leq \ \ CK\int|\nabla{\rm Ric}||\nabla{\rm Rm}||{\rm Rm}|^{p-2}\phi^{2p}dV_{t} \ \ \leq \ \ 
\frac{1}{100}B_{1}+CK^{2}B_{2}
$$
and
$$
-\frac{1}{2}\int\langle\nabla|{\rm Ric}|^{2}, \nabla\phi^{2p}\rangle|{\rm Rm}|^{p-1}dV_{t}
$$
\begin{equation}
= \ \ \int{\rm Ric}\ast\nabla{\rm Ric}\ast\phi^{2p-1}\ast\nabla\phi\ast|{\rm Rm}|^{p-1}dV_{t}\label{2.19}
\end{equation}
$$
\leq \ \ CK\int|\nabla{\rm Ric}||\nabla\phi||{\rm Rm}|^{p-1}\phi^{2p-1}dV_{t} \ \ 
\leq \ \ \frac{1}{100}B_{1}+CK^{2}A_{4}.
$$
The last integral in (\ref{2.17}) can be simplified, by using (\ref{2.8}), into
$$
\frac{1}{2}\int|{\rm Ric}|^{2}(\partial_{t}|{\rm Rm}|^{p-1})
\phi^{2p}dV_{t}=\frac{p-1}{4}\int|{\rm Ric}|^{2}|{\rm Rm}|^{p-3}
\bigg[\nabla^{2}{\rm Ric}\ast{\rm Rm}
$$
$$
+ \ {\rm Ric}\ast{\rm Rm}\ast{\rm Rm}+{\rm Rm}\ast{\rm Rm}\ast F\ast F+{\rm Rm}\ast\nabla^{2}F\ast F+{\rm Rm}\ast\nabla F\ast\nabla F
\bigg]\phi^{2p}dV_{t}
$$
$$
\leq C\int(\nabla^{2}{\rm Ric}\ast{\rm Rm})|{\rm Ric}|^{2}|{\rm Rm}|^{p-3}\phi^{2p}dV_{t}
+CK^{2}A_{1}+CKL^{2}A_{1}
$$
$$
+ \ C\int({\rm Rm}\ast\nabla F\ast \nabla F+{\rm Rm}\ast\nabla^{2} F\ast F)|{\rm Ric}|^{2}|{\rm Rm}|^{p-3}
\phi^{2p}dV_{t}.
$$
According to
$$
C\int(\nabla^{2}{\rm Ric}\ast{\rm Rm})|{\rm Ric}|^{2}|{\rm Rm}|^{p-3}
\phi^{2p}dV_{t}
$$
$$
= \ \ C\int\nabla{\rm Ric}\ast\nabla\left(|{\rm Ric}|^{2}
\phi^{2p}\ast{\rm Rm}\ast|{\rm Rm}|^{p-3}\right)dV_{t}
$$
$$
= \ \ C\int\nabla{\rm Ric}\ast\bigg[{\rm Ric}\ast\nabla{\rm Ric}
\ast{\rm Rm}\ast\phi^{2p}|{\rm Rm}|^{p-3}+|{\rm Ric}|^{2}|{\rm Rm}|^{p-3}\phi^{2p-1}\ast\nabla\phi\ast{\rm Rm}
$$
$$
+ \ |{\rm Ric}|^{2}\phi^{2p}\ast\nabla{\rm Rm}\ast|{\rm Rm}|^{p-3}+|{\rm Ric}|^{2}\phi^{2p}
\ast{\rm Rm}\ast|{\rm Rm}|^{p-5}\ast{\rm Rm}\ast\nabla{\rm Rm}\bigg]dV_{t}
$$
$$
\leq \ \ CK\int|\nabla{\rm Ric}|^{2}|{\rm Rm}|^{p-2}\phi^{2p}dV_{t}
+C\int|\nabla{\rm Ric}||\nabla{\rm Rm}||{\rm Ric}|^{2}|{\rm Rm}|^{p-3}\phi^{2p}dV_{t}
$$
$$
+ \ C\int|\nabla{\rm Ric}||\nabla\phi||{\rm Ric}|^{2}|{\rm Rm}|^{p-2}\phi^{2p-1}dV_{t} \ \ \leq \ \ \frac{1}{100}B_{1}+CK^{2}B_{2}+CK^{2}A_{4}
$$
and
$$
C\int({\rm Rm}\ast\nabla F\ast\nabla F)|{\rm Ric}|^{2}||{\rm Rm}|^{p-3}\phi^{2p}dV_{t} \ \ \leq \ \ C\int|\nabla F|^{2}|{\rm Ric}|^{2}|{\rm Rm}|^{p-2}\phi^{2p}dV_{t}
$$
$$
\leq \ \ CK\int|\nabla F|^{2}|{\rm Rm}|^{p-1}\phi^{2p}dV_{t} \ \ \leq \ \ CKB_{3}
$$
and
$$
C\int({\rm Rm}\ast\nabla^{2}F\ast F)|{\rm Ric}|^{2}|{\rm Rm}|^{p-3}\phi^{2p}dV_{t}
$$
$$
= \ \ C\int\nabla F\ast\nabla({\rm Rm}\ast F\ast|{\rm Ric}|^{2}\ast|{\rm Rm}|^{p-3}\ast\phi^{2p})dV_{t}
$$
$$
= \ \ C\int\nabla F\ast\bigg[\nabla{\rm Rm}\ast F\ast|{\rm Ric}|^{2}\ast|{\rm Rm}|^{p-3}\ast\phi^{2p}
+{\rm Rm}\ast\nabla F\ast|{\rm Ric}|^{2}\ast|{\rm Rm}|^{p-3}\ast\phi^{2p}
$$
$$
+ \ {\rm Rm}\ast F\ast{\rm Ric}\ast\nabla{\rm Ric}\ast|{\rm Rm}|^{p-3}\ast\phi^{2p}
+{\rm Rm}\ast F\ast|{\rm Ric}|^{2}\ast|{\rm Rm}|^{p-3}\ast\phi^{2p-1}\ast\nabla\phi
$$
$$
+ \ {\rm Rm}\ast F\ast|{\rm Ric}|^{2}\ast\phi^{2p}\ast|{\rm Rm}|^{p-5}\ast{\rm Rm}\ast\nabla{\rm Rm}\bigg]dV_{t}
$$
$$
\leq \ \ CLK\int|\nabla F||\nabla{\rm Rm}||{\rm Rm}|^{p-2}\phi^{2p}dV_{t}
+CK\int|\nabla F|^{2}|{\rm Rm}|^{p-1}\phi^{2p}dV_{t}
$$
$$
+ \ CL\int|\nabla F||\nabla{\rm Ric}||{\rm Rm}|^{p-1}\phi^{2p}dV_{t}
+CLK\int|\nabla F||\nabla\phi||{\rm Rm}|^{p-1}\phi^{2p-1}dV_{t}
$$
$$
\leq \ \ \frac{1}{100}B_{1}+CK^{2} B_{2}+C(K+L^{2})B_{3}
+CK^{2}A_{4},
$$
and plugging (\ref{2.18}) -- (\ref{2.19}) into (\ref{2.17}), we arrive at
$$
\frac{1}{2}\int[(\Delta-\partial_{t})|{\rm Ric}|^{2}]|{\rm Rm}|^{2}\phi^{6}dV_{t} \ \ 
\leq \ \ -\frac{d}{dt}\left(\frac{1}{2}\int|{\rm Ric}|^{2}|{\rm Rm}|^{p-1}\phi^{2p}dV_{t}\right)
$$
\begin{equation}
+ \ \ \frac{4}{100}B_{1}+CK^{2}B_{2}+C(K+L^{2})B_{3}+CK^{2}(K+L^{2})A_{2}+CK^{2}A_{4}.\label{2.20}
\end{equation}

Substituting (\ref{2.16}) and (\ref{2.20}) into (\ref{2.15}) yields

\begin{lemma}\label{l2.2} We have
\begin{eqnarray}
B_{1}&\leq& CKB_{2}+C(K+L^{2})B_{3}+CKA_{1}+CK(K+L^{2})A_{2}+CKA_{4}\nonumber\\
&&+ \ CK(K+L^{2})A_{1}-\frac{d}{dt}\left(\frac{1}{2}\int|{\rm Ric}|^{2}|{\rm Rm}|^{p-1}\phi^{2p}dV_{t}\right).\label{2.21}
\end{eqnarray}
\end{lemma}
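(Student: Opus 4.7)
The plan is to multiply the pointwise inequality (\ref{2.6}) by $|{\rm Rm}|^{p-1}\phi^{2p}$ and integrate over $M$, arriving at the intermediate bound (\ref{2.15}), in which the two ``easy'' terms $CK^2 A_1 + CK^2L^2 A_2$ are already explicit. The content of the lemma is then to estimate the two remaining integrals: a \emph{parabolic} integral $I_{\rm par}:=\frac{1}{2}\int[(\Delta-\partial_t)|{\rm Ric}|^2]\,|{\rm Rm}|^{p-1}\phi^{2p}dV_t$, and a \emph{mixed} integral $I_{\rm mix}:=\int({\rm Ric}\ast\nabla^2F\ast F+{\rm Ric}\ast\nabla F\ast\nabla F)|{\rm Rm}|^{p-1}\phi^{2p}dV_t$.

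For $I_{\rm mix}$ I would integrate the $\nabla^2F$ piece by parts, distributing one derivative onto each of the remaining factors $({\rm Ric},F,|{\rm Rm}|^{p-1},\phi^{2p})$ via the product rule. Each resulting integrand contains a single ``bad'' derivative ($\nabla{\rm Ric}$, $\nabla F$, $\nabla{\rm Rm}$, or $\nabla\phi$) multiplied by $\nabla F$; Cauchy--Schwarz, applied so the bad derivative is squared against itself, places every term in $\{B_1,B_2,B_3,A_4\}$, while the bounds $|{\rm Ric}|\leq K$ and $|F|\leq L$ supply the scalar coefficients. The piece ${\rm Ric}\ast\nabla F\ast\nabla F$ is immediate: $|{\rm Ric}|$ gives a factor of $K$ and the remainder is $B_3$. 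This is precisely (\ref{2.16}), with a small fraction $\tfrac{1}{100}B_1$ allowed on the right-hand side for later absorption.

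The integral $I_{\rm par}$ is split by $\Box=\partial_t-\Delta$. The $\Delta$ contribution is integrated by parts to produce $-\langle\nabla|{\rm Ric}|^2,\nabla(|{\rm Rm}|^{p-1}\phi^{2p})\rangle$, whose two pieces are then handled by Cauchy--Schwarz exactly as in (\ref{2.18}) and (\ref{2.19}). The $\partial_t$ contribution is rewritten as a full time derivative, yielding $-\frac{d}{dt}\left(\frac{1}{2}\int|{\rm Ric}|^2|{\rm Rm}|^{p-1}\phi^{2p}dV_t\right)$ together with correction terms from $\partial_t|{\rm Rm}|^{p-1}$ and from $\partial_t dV_t$; the latter is controlled by (\ref{2.10}) and produces the $CK^2(K+L^2)A_2$ term. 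Expanding $\partial_t|{\rm Rm}|^{p-1}$ via (\ref{2.8}) reintroduces $\nabla^2{\rm Ric}$ and $\nabla^2F$ contributions, which I would integrate by parts \emph{again} and bound by Cauchy--Schwarz in the same style as in $I_{\rm mix}$. Collecting everything produces (\ref{2.20}); substituting (\ref{2.16}) and (\ref{2.20}) into (\ref{2.15}) gives the lemma after absorbing the finitely many $\tfrac{1}{100}B_1$ contributions on the left.

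The main obstacle is not any single analytic manoeuvre but the careful \textbf{bookkeeping}: one must integrate by parts twice --- once inside $I_{\rm mix}$ and once more inside the $\partial_t|{\rm Rm}|^{p-1}$ expansion of $I_{\rm par}$, which itself regenerates $\nabla^2{\rm Ric}$ and $\nabla^2F$ via (\ref{2.8}) --- and, after each product-rule expansion, verify that every term, once Cauchy--Schwarz is applied with the right pairing, falls into exactly one of the labelled integrals $B_1,B_2,B_3,A_1,A_2,A_4$, with polynomial coefficients in $K,L$ of the form stated. Since only a bounded number of $\tfrac{1}{100}B_1$ contributions appear, their total coefficient stays strictly below $1$, so $B_1$ can be cleanly absorbed to the left in the final step.
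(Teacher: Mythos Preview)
Your proposal is correct and follows essentially the same route as the paper: starting from (\ref{2.15}), handling $I_{\rm mix}$ by one integration by parts of the $\nabla^{2}F$ factor to obtain (\ref{2.16}), handling the Laplacian part of $I_{\rm par}$ by one integration by parts to obtain (\ref{2.18})--(\ref{2.19}), and handling the $\partial_{t}$ part by passing to a full time derivative and then expanding $\partial_{t}|{\rm Rm}|^{p-1}$ via (\ref{2.8}), which regenerates $\nabla^{2}{\rm Ric}$ and $\nabla^{2}F$ pieces requiring a second integration by parts. Your identification of the main difficulty as bookkeeping --- tracking that every term lands in one of $B_{1},B_{2},B_{3},A_{1},A_{2},A_{4}$ with the stated $K,L$ weights, and that only boundedly many $\tfrac{1}{100}B_{1}$ contributions arise --- is exactly right.
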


\subsection{The estimate for $\boldsymbol{B_{2}}$}\label{subsection2.2}

Using (\ref{2.7}) we obtain
\begin{eqnarray}
B_{2}&=&\int|\nabla{\rm Rm}|^{2}|{\rm Rm}|^{p-3}\phi^{2p}dV_{t} \ \ \leq \ \ \int|{\rm Rm}|^{p-3}\phi^{2p}\bigg[-\frac{1}{2}\Box|{\rm Rm}|^{2}
+C|{\rm Rm}|^{3}\nonumber\\
&&+ \ CL^{2}|{\rm Rm}|^{2}+{\rm Rm}\ast(\nabla^{2}F\ast F+\nabla F\ast\nabla F)\bigg]dV_{t}\nonumber\\
&=&\frac{1}{2}\int[(\Delta-\partial_{t})|{\rm Rm}|^{2}]|{\rm Rm}|^{p-3}\phi^{2p}dV_{t}+CA_{1}+CL^{2}A_{2}\label{2.22}\\
&&+ \ \int({\rm Rm}\ast\nabla^{2}F\ast F+{\rm Rm}\ast\nabla F\ast\nabla F)|{\rm Rm}|^{p-3}\phi^{2p}dV_{t}.\nonumber
\end{eqnarray}
To deal with the last two integrals in (\ref{2.22}), we need to estimate the term
$$
\int|\nabla F|^{2}|{\rm Rm}|^{p-2}\phi^{2p}dV_{t}
$$
that can be bounded by $B_{3}$ and $B_{4}$. Indeed
\begin{eqnarray}
\int|\nabla F|^{2}|{\rm Rm}|^{p-2}\phi^{2p}dV_{t}
&=&\int\left(|\nabla F||{\rm Rm}|^{\frac{p-1}{2}}\phi^{p}\right)
\left(|\nabla F||{\rm Rm}|^{\frac{p-3}{2}}\phi^{p}\right)dV_{t}\nonumber\\
&\leq&\epsilon B_{3}+C_{\epsilon}B_{4},\label{2.23}
\end{eqnarray}
for any given $\epsilon>0$ with $C_{\epsilon}=C/\epsilon$. Using (\ref{2.23}) we obtain
$$
\int({\rm Rm}\ast\nabla^{2}F\ast F)|{\rm Rm}|^{p-3}\phi^{2p}dV_{t} \ \ = \ \ 
\int\nabla F\ast\nabla({\rm Rm}\ast F\ast|{\rm Rm}|^{p-3}\phi^{2p})dV_{t}
$$
$$
= \ \ \int\nabla F\ast\bigg[\nabla{\rm Rm}\ast F\ast|{\rm Rm}|^{p-3}\phi^{2p}
+{\rm Rm}\ast\nabla F\ast|{\rm Rm}|^{p-3}\phi^{2p}
$$
$$
+ \ {\rm Rm}\ast F\ast\phi^{2p-1}\ast\nabla\phi\ast|{\rm Rm}|^{p-3}+{\rm Rm}\ast F\ast\phi^{2p}\ast|{\rm Rm}|^{p-5}\ast{\rm Rm}\ast\nabla{\rm Rm}\bigg]dV_{t}
$$
\begin{equation}
\leq \ \ CL\int|\nabla F||\nabla{\rm Rm}||{\rm Rm}|^{p-3}\phi^{2p}dV_{t}
+C\int|\nabla F|^{2}|{\rm Rm}|^{p-2}\phi^{2p}dV_{t}\label{2.24}
\end{equation}
$$
+ \ CL\int|\nabla F||\nabla\phi||{\rm Rm}|^{p-2}\phi^{2p-1}dV_{t} \ \ \leq \ \ \frac{1}{100}B_{2}+CL^{2}B_{4}+\epsilon B_{3}+CA_{4}+C_{\epsilon}B_{4}
$$
and
\begin{equation}
\int({\rm Rm}\ast\nabla F\ast\nabla F)
|{\rm Rm}|^{p-3}\phi^{2p}dV_{t}\leq \epsilon B_{3}+C_{\epsilon}B_{4}.\label{2.25}
\end{equation}
On the other hand, we get, because $p\geq5$,
$$
\frac{1}{2}\int(\Delta|{\rm Rm}|^{2})|{\rm Rm}|^{p-3}\phi^{2p}dV_{t} \ \ = \ \ -\frac{1}{2}
\int\langle\nabla|{\rm Rm}|^{2},\nabla(|{\rm Rm}|^{p-3}\phi^{2p})\rangle dV_{t}
$$
$$
= \ -\frac{p-3}{4}\int\left|\nabla|{\rm Rm}|^{2}\right|^{2}|{\rm Rm}|^{p-5}\phi^{2p} dV_{t}
+\int\left({\rm Rm}\ast\nabla{\rm Rm}\ast\nabla\phi\ast\phi^{2p-1}|{\rm Rm}|^{p-3}\right)dV_{t}
$$
\begin{equation}
\leq \ \ C\int|\nabla{\rm Rm}||\nabla\phi||{\rm Rm}|^{p-2}\phi^{2p-1}dV_{t} \ \ \leq \ \ \frac{1}{100}B_{2}+CA_{4}.\label{2.26}
\end{equation}
Moreover
$$
-\frac{1}{2}\int(\partial_{t}|{\rm Rm}|^{2})|{\rm Rm}|^{p-3}\phi^{2p}dV_{t} \ \ = \ \ 
-\frac{1}{2}\int\bigg[\partial_{t}(|{\rm Rm}|^{2}|{\rm Rm}|^{p-3}\phi^{2p}dV_{t})
$$
$$
- \ |{\rm Rm}|^{2}(\partial_{t}|{\rm Rm}|^{p-3})\phi^{2p}dV_{t}
-|{\rm Rm}|^{2}|{\rm Rm}|^{p-3}\phi^{2p}(\partial_{t}dV_{t})\Bigg]
$$
$$
= \ -\frac{1}{2}\frac{d}{dt}\left(\int|{\rm Rm}|^{p-1}\phi^{2p}dV_{t}\right)
+\frac{1}{2}\int|{\rm Rm}|^{p-1}\phi^{2p}(\partial_{t}dV_{t})
$$
$$
+ \ \frac{p-3}{4}\int|{\rm Rm}|^{p-3}(\partial_{t}|{\rm Rm}|^{2})\phi^{2p}dV_{t}
$$
and hence
\begin{equation}
-\frac{p-1}{4}\int(\partial_{t}|{\rm Rm}|^{2})\phi^{6}dV_{t}
\leq C(K+L^{2})A_{2}-\frac{1}{2}\frac{d}{dt}\left(\int|{\rm Rm}|^{2}\phi^{6}dV_{t}\right).\label{2.27}
\end{equation}
Plugging (\ref{2.23}) -- (\ref{2.27}) into (\ref{2.22}) yields

\begin{lemma}\label{l2.3} We have
\begin{eqnarray}
B_{2}&\leq& \epsilon B_{3}+C(C_{\epsilon}+L^{2})B_{4}+CA_{1}+C(K+L^{2})A_{2}+CA_{4}\nonumber\\
&&- \ \frac{d}{dt}\left(\frac{2}{p-1}\int|{\rm Rm}|^{p-1}\phi^{2p}dV_{t}\right).\label{2.28}
\end{eqnarray}
\end{lemma}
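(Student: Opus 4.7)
The plan is to start from the differential inequality (\ref{2.7}) for $|\nabla{\rm Rm}|^{2}$, multiply through by $|{\rm Rm}|^{p-3}\phi^{2p}$, and integrate over $M$; this yields the identity (\ref{2.22}) already displayed in the text. The polynomial terms $|{\rm Rm}|^{3}$ and $L^{2}|{\rm Rm}|^{2}$ contribute $A_{1}$ and $L^{2}A_{2}$ directly. What remains is (i) to bound the two curvature-form couplings $\int({\rm Rm}\ast\nabla^{2}F\ast F)|{\rm Rm}|^{p-3}\phi^{2p}dV_{t}$ and $\int({\rm Rm}\ast\nabla F\ast\nabla F)|{\rm Rm}|^{p-3}\phi^{2p}dV_{t}$, and (ii) to convert $\tfrac{1}{2}\int[(\Delta-\partial_{t})|{\rm Rm}|^{2}]|{\rm Rm}|^{p-3}\phi^{2p}dV_{t}$ into absorbable terms plus an exact time derivative.

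For (i), I would integrate by parts in the $\nabla^{2}F$ term to move one derivative off $F$; the derivative then falls on $\nabla{\rm Rm}$, on $\nabla F$, on $\nabla\phi$, or on $\nabla|{\rm Rm}|^{p-3}$ (producing another $\nabla{\rm Rm}$ factor). Cauchy--Schwarz on each resulting piece bounds it by a small multiple of $B_{2}$, by $A_{4}$, or by the mixed intermediate integral $\int|\nabla F|^{2}|{\rm Rm}|^{p-2}\phi^{2p}dV_{t}$. The latter is then split, via the elementary interpolation (\ref{2.23}), as $\epsilon B_{3}+C_{\epsilon}B_{4}$. The $\nabla F\ast\nabla F$ coupling is already of this intermediate form and is disposed of by the same interpolation, as displayed in (\ref{2.25}).

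For (ii), I would treat the Laplacian piece by a single integration by parts: the contribution from differentiating $|{\rm Rm}|^{p-3}$ yields a term of definite favorable sign that can be discarded, while the cutoff cross-term is Cauchy--Schwarzed into $\tfrac{1}{100}B_{2}+CA_{4}$ as in (\ref{2.26}). For the time-derivative piece, the pointwise identity $|{\rm Rm}|^{p-3}\partial_{t}|{\rm Rm}|^{2}=\tfrac{2}{p-1}\partial_{t}|{\rm Rm}|^{p-1}$ allows the integrand to be written as a time derivative of $|{\rm Rm}|^{p-1}$; integrating against $\phi^{2p}dV_{t}$ and pulling $\tfrac{d}{dt}$ outside via the product rule produces the exact-differential term appearing in the lemma, with the remainder from $\partial_{t}dV_{t}$ bounded by $C(K+L^{2})A_{2}$ using (\ref{2.10}).

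Assembling all contributions, absorbing the $\tfrac{1}{100}B_{2}$ pieces into the left-hand side, and merging constants yields (\ref{2.28}). The main obstacle is the $\nabla^{2}F$ coupling: it is the only source of the mixed integral $\int|\nabla F|^{2}|{\rm Rm}|^{p-2}\phi^{2p}dV_{t}$, which sits between $B_{3}$ and $B_{4}$, and its disposal via the Young-type interpolation (\ref{2.23}) is what forces the final bound to take the asymmetric form $\epsilon B_{3}+(C_{\epsilon}+L^{2})B_{4}$ rather than a symmetric estimate in the two bad terms. The restriction $p\geq 5$ enters when integrating $\Delta|{\rm Rm}|^{2}$ by parts, ensuring that the exponent $p-5$ remains nonnegative so the discarded term has the correct sign.
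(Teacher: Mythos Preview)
Your proposal is correct and follows the paper's argument essentially step for step: start from (\ref{2.22}), handle the two curvature--form couplings by integration by parts and the interpolation (\ref{2.23}), dispose of the Laplacian piece as in (\ref{2.26}), and convert the time-derivative piece into the exact differential via $|{\rm Rm}|^{p-3}\partial_{t}|{\rm Rm}|^{2}=\tfrac{2}{p-1}\partial_{t}|{\rm Rm}|^{p-1}$ together with (\ref{2.10}). One small refinement: when you Cauchy--Schwarz the cross-term $CL\int|\nabla F|\,|\nabla{\rm Rm}|\,|{\rm Rm}|^{p-3}\phi^{2p}dV_{t}$ arising from the $\nabla^{2}F$ integration by parts, the partner of the absorbable $\tfrac{1}{100}B_{2}$ is $CL^{2}B_{4}$ directly (not the intermediate mixed integral), and this is what produces the explicit $L^{2}$ in the coefficient $C(C_{\epsilon}+L^{2})$ of $B_{4}$ in (\ref{2.28}).
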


\subsection{The estimate for $\boldsymbol{B_{3}}$}\label{subsection2.3}

According to (\ref{2.11}), we have
$$
B_{3} \ = \ \int|\nabla F|^{2}|{\rm Rm}|^{p-1}\phi^{2p}dV_{t} \ \leq \ 
\int|{\rm Rm}|^{p-1}\phi^{2p}
\left[-\Box|F|^{2}+CL^{2}|{\rm Rm}|+CL^{4}\right]dV_{t}
$$
$$
\leq \ \int|{\rm Rm}|^{p-1}\phi^{2p}
[(\Delta-\partial_{t})|F|^{2}]dV_{t}+CL^{2}A_{1}+CL^{4}A_{2}.
$$
First compute
$$
\int(\Delta |F|^{2})|{\rm Rm}|^{p-1}\phi^{2p}dV_{t} \ \ = \ \ \int\nabla|F|^{2}\ast\nabla(|{\rm Rm}|^{p-1}\ast\phi^{2p})dV_{t}
$$
$$
= \ \ \int F\ast\nabla F\ast(|{\rm Rm}|^{p-3}\ast{\rm Rm}\ast\nabla{\rm Rm}\ast\phi^{2p}
+|{\rm Rm}|^{p-1}\ast\phi^{2p-1}\ast\nabla\phi)dV_{t}
$$
$$
\leq \ \ CL\int|\nabla F||\nabla{\rm Rm}||{\rm Rm}|^{p-2}\phi^{2p}dV_{t}+CL\int|\nabla F||\nabla\phi||{\rm Rm}|^{p-1}\phi^{2p-1}dV_{t}
$$
$$
\leq \ \ \frac{1}{100}B_{3}+CL^{2}B_{2}+CL^{2}A_{4}.
$$
Using $\partial_{t}|{\rm Rm}|^{p-1}=\frac{p-1}{2}|{\rm Rm}|^{p-3}\partial_{t}|{\rm Rm}|^{2}$ yields
$$
-\int|{\rm Rm}|^{p-1}\phi^{2p}
(\partial_{t}|F|^{2})dV_{t} \ \ = \ \ -\int\bigg[\partial_{t}(|{\rm Rm}|^{p-1}\phi^{2p}
|F|^{2}dV_{t})
$$
$$
- \ (\partial_{t}|{\rm Rm}|^{p-1})\phi^{2p}|F|^{2}dV_{t}
-|{\rm Rm}|^{p-1}\phi^{2p}|F|^{2}(\partial_{t}dV_{t})\bigg]
$$
$$
= \ \ -\frac{d}{dt}\left(\int|F|^{2}|{\rm Rm}|^{p-1}\phi^{2p}dV_{t}\right)
+\int\phi^{2p}|F|^{2}(\partial_{t}|{\rm Rm}|^{p-1})dV_{t}
$$
$$
+ \ \int|{\rm Rm}|^{p-1}|F|^{2}\phi^{2p}(\partial_{t}dV_{t}) \ \ \leq \ \ C(K+L^{2})L^{2}A_{2}
-\frac{d}{dt}\left(\int|F|^{2}|{\rm Rm}|^{p-1}\phi^{2p}dV_{t}\right)
$$
$$
+ \ \int\phi^{2p}|F|^{2}|{\rm Rm}|^{p-3}\bigg(\nabla^{2}{\rm Ric}\ast{\rm Rm}
+{\rm Ric}\ast{\rm Rm}\ast{\rm Rm}+{\rm Rm}\ast{\rm Rm}\ast F\ast F
$$
$$
+ \ {\rm Rm}\ast\nabla^{2}F\ast F+{\rm Rm}\ast\nabla F\ast\nabla F\bigg)dV_{t}
$$
$$
\leq \ \ C(K+L^{2})L^{2}A_{2}
-\frac{d}{dt}\left(\int|F|^{2}|{\rm Rm}|^{p-1}\phi^{2p}dV_{t}\right)
$$
$$
+ \ \int(\nabla^{2}{\rm Ric}\ast{\rm Rm})|{\rm Rm}|^{p-3}|F|^{2}\phi^{2p}dV_{t}+ CKL^{2}A_{2}+CL^{4}A_{2}
$$
$$
+ \ \int(\nabla^{2}F\ast F\ast{\rm Rm})|{\rm Rm}|^{p-3}|F|^{2}\phi^{2p}dV_{t}
+CL^{2}\int|\nabla F|^{2}|{\rm Rm}|^{p-2}\phi^{2p}dV_{t}.
$$
From (\ref{2.25}), we can get
$$
CL^{2}\int|\nabla F|^{2}|{\rm Rm}|^{p-2}\phi^{2p}dV_{t}
\leq CL^{2}(\epsilon B_{3}+C_{\epsilon}B_{4})
\leq\frac{1}{100}B_{3}+CL^{4}B_{4}
$$
by choosing $\epsilon=1/100 CL^{2}$. Moreover
$$
\int(\nabla^{2}{\rm Ric}\ast{\rm Rm})|{\rm Rm}|^{p-3}|F|^{2}\phi^{2p}dV_{t} \ \ = \ \ 
\int\nabla{\rm Ric}\ast\nabla({\rm Rm}\ast|F|^{2}\ast|{\rm Rm}|^{p-3}\ast\phi^{2p})dV_{t}
$$
$$
\leq \ \ CL^{2}\int|\nabla{\rm Ric}||\nabla{\rm Rm}||{\rm Rm}|^{p-3}\phi^{2p}dV_{t}+CL\int|\nabla{\rm Ric}||\nabla F||{\rm Rm}|^{p-2}
\phi^{2p}dV_{t}
$$
$$
+ \ CL^{2}\int|\nabla{\rm Ric}||\nabla{\rm Rm}||{\rm Rm}|^{p-3}\phi^{2p}dV_{t}
+CL^{2}\int|\nabla{\rm Ric}||\nabla\phi||{\rm Rm}|^{p-2}\phi^{2p-1}dV_{t}
$$
$$
\leq \ \ CL^{2}B_{2}+\frac{1}{100}B_{3}+CL^{2}A_{4}.
$$
Finally,  using again (\ref{2.25}), we arrive at
$$
\int(\nabla^{2}F\ast F\ast{\rm Rm})|{\rm Rm}|^{p-3}|F|^{:2}\phi^{2p}dV_{t} \ \ = \ \ \int\nabla F\ast\nabla(
F\ast{\rm Rm}\ast|{\rm Rm}|^{p-3}|F|^{2}\phi^{2p})dV_{t}
$$
$$
\leq \ \ CL^{2}\int|\nabla F|^{2}|{\rm Rm}|^{p-2}\phi^{2p}dV_{t}
+CL^{3}\int|\nabla F||\nabla{\rm Rm}||{\rm Rm}|^{p-3}\phi^{2p}dV_{t}
$$
$$
+ \ CL^{2}\int|\nabla F|^{2}|{\rm Rm}|^{p-2}\phi^{2p}dV_{t}
+CL^{3}\int|\nabla F||\nabla\phi||{\rm Rm}|^{p-2}\phi^{2p-1}dV_{t}
$$
$$
\leq \ \ \frac{1}{100}B_{3}+CL^{4}B_{4}+CL^{2}B_{2}
+CL^{4}B_{4}+\frac{1}{100}B_{3}+CL^{6}A_{4}.
$$
Consequently, 
$$
B_{3} \ \ \leq \ \ \frac{5}{100}B_{3}-\frac{d}{dt}\left(\int|F|^{2}|{\rm Rm}|^{p-1}\phi6{2p}dV_{t}\right)
$$
$$
+ \ C(K+L^{2})L^{2}A_{2}+CL^{2}B_{2}+CL^{4}B_{4}+CL^{2}(1+L^{4})A_{4}+CL^{2}A_{2}
$$
so that
\begin{eqnarray}
B_{3}&\leq& -\frac{d}{dt}\left(2\int|F|^{2}|{\rm Rm}|^{p-1}\phi^{2p}dV_{t}\right)
+CL^{2}B_{2}+CL^{4}B_{4}\nonumber\\
&&+ \ C(K+L^{2})L^{2}A_{2}+CL^{2}A_{1}+CL^{2}(1+L^{4})A_{4}.\label{2.29}
\end{eqnarray}
Taking $\epsilon=1/2CL^{2}$ in (\ref{2.28}) and using (\ref{2.29}) we obtain
\begin{eqnarray*}
B_{2}&\leq&-\frac{d}{dt}\left(\frac{1}{CL^{2}}\int|F|^{2}|{\rm Rm}|^{2p-1}\phi^{2p}dV_{t}\right)+\frac{1}{2}B_{2}+CL^{2}B_{4}+C(K+L^{2})A_{2}\nonumber\\
&&+ \ CA_{1}+C(1+L^{4})A_{4}-\frac{d}{dt}
\left(\frac{2}{p-1}\int|{\rm Rm}|^{p-1}\phi^{2p}dV_{t}\right).
\end{eqnarray*}
Thus

\begin{lemma}\label{l2.4} We have
\begin{eqnarray}
B_{2}&\leq&-\frac{d}{dt}\left(\frac{C}{L^{2}}\int|{\rm Rm}|^{p-1}|F|^{2}\phi^{2p}dV_{t}
+C\int|{\rm Rm}|^{p-1}\phi^{2p}dV_{t}\right)\nonumber\\
&&+ \ CL^{2}B_{4}+CA_{1}+C(K+L^{2})A_{2}+C(1+L^{4})A_{4}\label{2.30}
\end{eqnarray}
and
\begin{eqnarray}
B_{3}&\leq&-\frac{d}{dt}\left(C\int|{\rm Rm}|^{p-1}|F|^{2}\phi^{2p}dV_{t}
+C(1+L^{2})\int|{\rm Rm}|^{p-1}\phi^{2p}dV_{t}\right)+CL^{4}B_{4}\nonumber\\
&&+ \ C(1+L^{2})A_{1}+C(1+L^{2})(K+L^{2})A_{2}
+C(1+L^{2})(1+L^{4})A_{4}.\label{2.31}
\end{eqnarray}
\end{lemma}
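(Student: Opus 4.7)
The plan is to combine the bound (2.28) from Lemma \ref{l2.3} with the already-established bound (2.29) on $B_3$ in order to uncouple the mutual dependence of $B_2$ and $B_3$ on each other, at the cost of introducing one additional time-derivative term on the right-hand side.

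First, I would substitute the bound (2.29) into (2.28). Because $B_3$ enters (2.28) with the free parameter $\epsilon$, after substitution the resulting $B_2$-feedback term has coefficient $\epsilon \cdot CL^2$. Choosing $\epsilon = 1/(2CL^2)$ makes this feedback equal to exactly $\frac{1}{2}B_2$, which can be absorbed into the left-hand side of (2.28). The remaining pieces coming from (2.29) after this substitution contribute: a $CL^4 B_4$ term (from $CL^2 \cdot L^2 B_4$), various $A_i$-terms with polynomial-in-$L$ prefactors (coming from multiplying the $A_1$, $A_2$, $A_4$ terms of (2.29) by $CL^2$ through the $\epsilon^{-1}$), and a time-derivative contribution $-\frac{d}{dt}\bigl(\tfrac{C}{L^2}\int |F|^2|{\rm Rm}|^{p-1}\phi^{2p}dV_t\bigr)$. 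Collecting this with the pre-existing time derivative in (2.28) and multiplying through by $2$ to clear the absorbed coefficient then produces (2.30).

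Next, for (2.31), I would simply plug the just-obtained estimate (2.30) into the $CL^2 B_2$ term on the right-hand side of (2.29). The $CL^2 B_4$ of (2.30) is promoted to $CL^4 B_4$ (which is already present in (2.29)), and each $A_i$ coefficient picks up an additional factor of $CL^2$, explaining the $C(1+L^2)$ prefactors in the final statement. The two time derivatives combine additively into the single $-\frac{d}{dt}\bigl(C\int |{\rm Rm}|^{p-1}|F|^2\phi^{2p}dV_t + C(1+L^2)\int |{\rm Rm}|^{p-1}\phi^{2p}dV_t\bigr)$ appearing in (2.31).

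The main obstacle, as in analogous estimates of this type, is not analytic but rather \emph{bookkeeping}: one must track the exact polynomial dependence in $K$ and $L$ of every coefficient so that the $(1+L^2)$, $(1+L^4)$ factors in the final statement come out correctly, and in particular one must verify that the time-derivative quantities assemble into a single monotone-in-time auxiliary functional. No genuinely new estimate is needed beyond the absorption trick enabled by the strategic choice $\epsilon = 1/(2CL^2)$; the analytic content has already been exhausted in the derivations of (2.28) and (2.29).
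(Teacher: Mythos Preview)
Your proposal is correct and follows exactly the paper's approach: substitute (2.29) into (2.28) with the choice $\epsilon = 1/(2CL^{2})$, absorb the resulting $\tfrac{1}{2}B_{2}$ to obtain (2.30), and then feed (2.30) back into the $CL^{2}B_{2}$ term of (2.29) to get (2.31). One small expository slip: the $A_i$ and $B_4$ contributions coming from (2.29) are multiplied by $\epsilon$ (hence \emph{divided} by $CL^{2}$), not by $\epsilon^{-1}$; the dominant $CL^{2}B_{4}$ in (2.30) actually arises from the $C(C_{\epsilon}+L^{2})B_{4}$ term already present in (2.28), but this does not affect the argument.
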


\subsection{The evolution for $\boldsymbol{A_{1}}$: {\bf I}}\label{subsection2.4}

To simplify notions, we introduce
\begin{equation}
R:=1+\sum_{1\leq i\leq 6}(K^{i}+L^{i}).\label{2.32}
\end{equation}
Therefore, from (\ref{2.21}), (\ref{2.30}) and (\ref{2.31}) we obtain
\begin{eqnarray}
B_{1}&\leq&CR(B_{2}+B_{3})+CR(A_{1}+A_{2}+A_{4})\nonumber\\
&&- \ \frac{d}{dt}
\left(C\int|{\rm Ric}|^{2}|{\rm Rm}|^{p-1}
\phi^{2p}dV_{t}\right),\label{2.33}\\
B_{2}&\leq&CRB_{4}+CR(A_{1}+A_{2}+A_{4})\nonumber\\
&&- \ \frac{d}{dt}\left(\frac{C}{L^{2}}\int|{\rm Rm}|^{p-1}|F|^{2}\phi^{2p}dV_{t}
+C\int|{\rm Rm}|^{p-1}\phi^{2p}dV_{t}\right),\label{2.34}\\
B_{3}&\leq&CRB_{4}+CR(A_{1}+A_{2}+A_{4})\nonumber\\
&&- \ \frac{d}{dt}\left(C\int|{\rm Rm}|^{p-1}|F|^{2}\phi^{2p}dV_{t}
+C(1+L^{2})\int|{\rm Rm}|^{p-1}\phi^{2p}dV_{t}\right).\label{2.35}
\end{eqnarray}
Moreover, the inequality (\ref{2.14}) becomes
\begin{equation}
    \frac{d}{dt}A_{1}\leq CR(B_{1}+B_{2}+B_{3})
    +CR(A_{1}+A_{4}).\label{2.36}
\end{equation}
Plugging (\ref{2.33})--(\ref{2.35}) into (\ref{2.36}) yields
\begin{eqnarray}
\frac{d}{dt}A_{1}&\leq&-\frac{d}{dt}\bigg[\frac{C(1+L^{2})R(1+R)}{L^{2}}\int|{\rm Rm}|^{p-1}|F|^{2}\phi^{2p}dV_{t}\nonumber\\
&&+ \ C(1+L^{2})R(1+R)\int|{\rm Rm}|^{p-1}\phi^{2p}dV_{t}+CR\int|{\rm Ric}|^{2}|{\rm Rm}|^{p-1}\phi^{2p}dV_{t}\bigg]\label{2.37}\\
&&+ \ CR^{2}(1+R)B_{4}
+CR^{2}(1+R)(A_{1}+A_{2}+A_{4}).\nonumber
\end{eqnarray}

\subsection{The estimate for $\boldsymbol{B_{4}}$}\label{subsection2.5}

Recall that
$$
B_{3}=\int|\nabla F|^{2}|{\rm Rm}|^{p-1}\phi^{2p}dV_{t}, \ \ \ 
B_{4}=\int|\nabla F|^{2}|{\rm Rm}|^{p-3}\phi^{2p}dV_{t}, \ \ \ p\geq3.
$$
To estimate $B_{4}$, write
\begin{equation}
B(p):=\int|\nabla F|^{2}|{\rm Rm}|^{p-1}\phi^{2p}dV_{t}, \ \ \ p\geq 1 \ \text{and} \ p\in\mathbb{N}.\label{2.38}
\end{equation}
Using the inequality 
$$
B_{4}=\int|\nabla F|^{2}|{\rm Rm}|^{(p-2)-1}\phi^{2(p-2)+4}dV_{t}
\leq C B(p-2)
$$
the estimate (\ref{2.35}) can be written as, where $p$ is an integer greater than $3$,
\begin{equation}
B(p)\leq CR B(p-2)+Cf(p),
\end{equation}
wuith
\begin{eqnarray}
f(p)&:=&R\left(\int|{\rm Rm}|^{p}\phi^{2p}dV_{t}
+\int|{\rm Rm}|^{p-1}\phi^{2p}dV_{t}
+\int|{\rm Rm}|^{p-1}|\nabla\phi|^{2}\phi^{2p-2}dV_{t}\right)\nonumber \\
&&- \ \frac{d}{dt}\left(\int|{\rm Rm}|^{p-1}|F|^{2}\phi^{2p}dV_{t}
+(1+L^{2})\int|{\rm Rm}|^{p-1}
\phi^{2p}dV_{t}\right).\label{2.40}
\end{eqnarray}

\begin{lemma}\label{l2.5} For any integer $p\geq3$ one has
\begin{equation}
    B(2m+1)\leq (CR)^{m}B(1)+\sum_{1\leq i\leq m}C(CR)^{m-i}f(2i+1), \ \ \ p=2m+1,
    \label{2.41}
\end{equation}
and
\begin{equation}
B(2m+2)\leq(CR)^{m}B(2)+\sum_{1\leq i\leq m}C(CR)^{m-i}f(2i+2), \ \ \ p=2m+2.\label{2.42}
\end{equation}
\end{lemma}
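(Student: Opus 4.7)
The statement is essentially an iterated form of the recurrence
\begin{equation*}
B(p) \leq CR\cdot B(p-2)+Cf(p),\qquad p\geq 3
\end{equation*}
already established just before the lemma (from the estimate (\ref{2.35}) together with the bound $B_{4}\leq CB(p-2)$). Since the recurrence decreases the argument by $2$, the odd and even cases decouple, which is exactly why the conclusion is stated in the two forms $p=2m+1$ and $p=2m+2$. The plan is therefore to fix the parity and do induction on $m\geq 1$, unrolling the recurrence $m$ times until the base argument ($B(1)$ or $B(2)$) is reached.

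For the odd case I would argue by induction on $m$. When $m=1$, the claim (\ref{2.41}) reads $B(3)\leq CR\cdot B(1)+Cf(3)$, which is exactly the recurrence at $p=3$. For the inductive step, assume
\begin{equation*}
B(2m+1)\leq (CR)^{m}B(1)+\sum_{1\leq i\leq m}C(CR)^{m-i}f(2i+1);
\end{equation*}
applying the recurrence at $p=2m+3$ gives $B(2m+3)\leq CR\cdot B(2m+1)+Cf(2m+3)$, and substituting the inductive hypothesis produces
\begin{equation*}
B(2m+3)\leq (CR)^{m+1}B(1)+\sum_{1\leq i\leq m}C(CR)^{m+1-i}f(2i+1)+Cf(2m+3),
\end{equation*}
which, after reindexing the new $f(2m+3)$ term as the $i=m+1$ summand, is (\ref{2.41}) with $m$ replaced by $m+1$. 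The even case (\ref{2.42}) is proved identically, starting from the base case $B(4)\leq CR\cdot B(2)+Cf(4)$ and iterating.

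\textbf{Anticipated obstacle.} There is no real obstacle: the lemma is a bookkeeping consequence of a single one-step inequality, and the bound $B_{4}\leq CB(p-2)$ used to convert (\ref{2.35}) into the stated recurrence for $B(p)$ is immediate from the definitions since $|\nabla F|^{2}|{\rm Rm}|^{p-3}\phi^{2p}=|\nabla F|^{2}|{\rm Rm}|^{(p-2)-1}\phi^{2(p-2)+4}$ and $\phi$ has values in $[0,1]$ on its compact support. The only point requiring a moment of care is the reindexing $k\mapsto i=m-k$ in the geometric sum, to ensure that the $f(2i+1)$ (resp. $f(2i+2)$) terms appear with the coefficients $C(CR)^{m-i}$ exactly as in (\ref{2.41}) and (\ref{2.42}).
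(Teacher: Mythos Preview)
Your proposal is correct and follows essentially the same approach as the paper: the paper's proof simply writes out the cases $p=3,4,5,6$ by hand and then states ``In general, by induction on $m$ we easily get the desired result,'' which is exactly the induction you have spelled out. Your inductive step and reindexing are accurate, and your remark that the recurrence and the bound $B_{4}\leq CB(p-2)$ are supplied by the text immediately preceding the lemma matches the paper's logic.
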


\begin{proof} For $p=3,4,5,6$ we obtain
\begin{eqnarray*}
B(3)&\leq&CRB(1)+Cf(3),\\
B(5)&\leq&CR B(3)+Cf(5) \ \ \leq \ \ (CR)^{2}B(1)
+Cf(5)+C^{2}Rf(3),\\
B(4)&\leq&CR B(2)+Cf(4),\\
B(6)&\leq& CR B(4)+Cf(6) \ \ \leq \ \ (CR)^{2}B(2)+Cf(6)+C^{2}R f(4).
\end{eqnarray*}
In general, by induction on $m$ we easily get the desired result.
\end{proof}

Because the integral 
$$
B(1)=\int|\nabla F|^{2}\phi^{2}dV_{t}
$$
does not involve any curvature, we shall use (\ref{2.41}), where $p$ is odd, to estimate $B_{4}$.

\begin{lemma}\label{l2.6} One has
\begin{eqnarray}
B(1)&\leq& CR\left(\int|{\rm Rm}|\phi^{2}dV_{t}+\int\phi^{2}dV_{t}
+\int|\nabla\phi|^{2}dV_{t}\right)\nonumber\\
&&- \ \frac{d}{dt}\left(2\int|F|^{2}\phi^{2}dV_{t}\right).\label{2.43}
\end{eqnarray}
\end{lemma}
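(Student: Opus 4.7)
The plan is to feed the pointwise inequality (\ref{2.11}) into the integral defining $B(1)$, then use integration by parts on the $\Delta|F|^2$ piece and the product rule on the $\partial_t|F|^2$ piece to convert the spatial second derivatives and time derivative of $|F|^2$ into the time derivative on the right-hand side plus cutoff terms.

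First I would multiply (\ref{2.11}) by $\phi^2$ and integrate over $M$, giving
\begin{equation*}
B(1)=\int|\nabla F|^{2}\phi^{2}dV_{t}\leq\int\phi^{2}\bigl[(\Delta-\partial_{t})|F|^{2}\bigr]dV_{t}+CL^{2}\int|{\rm Rm}|\phi^{2}dV_{t}+CL^{4}\int\phi^{2}dV_{t}.
\end{equation*}
The last two terms are already of the desired form since $L^{2},L^{4}\lesssim R$.

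For the Laplacian term, integration by parts yields $\int\phi^{2}\Delta|F|^{2}dV_{t}=-\int\langle\nabla|F|^{2},\nabla\phi^{2}\rangle dV_{t}$, which is bounded by $CL\int|\nabla F||\nabla\phi|\phi\,dV_{t}$. Applying Cauchy--Schwarz produces $\tfrac{1}{2}B(1)+CL^{2}\int|\nabla\phi|^{2}dV_{t}$, and the first piece is absorbed into the left-hand side. For the time-derivative term I would write
\begin{equation*}
-\int\phi^{2}(\partial_{t}|F|^{2})dV_{t}=-\frac{d}{dt}\left(\int|F|^{2}\phi^{2}dV_{t}\right)+\int|F|^{2}\phi^{2}(\partial_{t}dV_{t}),
\end{equation*}
and invoke (\ref{2.10}) to control the last integral by $CL^{2}(K+L^{2})\int\phi^{2}dV_{t}$, again an $R$-type term.

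Combining these bounds and absorbing $\tfrac{1}{2}B(1)$, everything on the right fits into the required template: coefficients on $\int|{\rm Rm}|\phi^{2}dV_{t}$, $\int\phi^{2}dV_{t}$ and $\int|\nabla\phi|^{2}dV_{t}$ are all of size $L^{2}$ or $L^{2}(K+L^{2})$, hence $\lesssim R$, while the single $-\frac{d}{dt}$ term multiplied through by $2$ produces the stated $-\frac{d}{dt}(2\int|F|^{2}\phi^{2}dV_{t})$. There is no real obstacle here; the only mild subtlety is ensuring the Cauchy--Schwarz splitting of $|\nabla F||\nabla\phi|\phi$ puts the $|\nabla F|^{2}\phi^{2}$ factor on the $B(1)$ side with coefficient strictly less than $1$, which is automatic.
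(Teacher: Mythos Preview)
Your proposal is correct and follows essentially the same approach as the paper: multiply (\ref{2.11}) by $\phi^{2}$, integrate by parts on the $\Delta|F|^{2}$ term, use the product rule and (\ref{2.10}) on the $\partial_{t}|F|^{2}$ term, and absorb the $\tfrac{1}{2}B(1)$ from Cauchy--Schwarz to produce the factor $2$ in the time-derivative term. The paper's write-up is nearly identical, merely phrasing the bounds via the $A_{i}(1)$ notation.
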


\begin{proof} For convenience let
\begin{eqnarray*}
B_{1}(p)&:=&\int|\nabla{\rm Ric}|^{2}|{\rm Rm}|^{p-1}\phi^{2p}dV_{t}, \ \ \ B_{2}(p) \ \ := \ \ \int|\nabla{\rm Rm}|^{2}|{\rm Rm}|^{p-3}\phi^{2p}dV_{t}\\
B_{3}(p)&:=&\int|\nabla F|^{2}|{\rm Rm}|^{p-1}\phi^{2p}dV_{t}, \ \ \ B_{4}(p) \ \ := \ \ \int|\nabla F|^{2}|{\rm Rm}|^{p-3}\phi^{2p}dV_{t}
\end{eqnarray*}
and
\begin{eqnarray*}
A_{1}(p)&:=&\int|{\rm Rm}|^{p}\phi^{2p}dV_{t}, \ \ \ A_{2}(p) \ \ := \ \ \int|{\rm Rm}|^{p-1}\phi^{2p}dV_{t}\\
A_{3}(p)&:=&\int|{\rm Rm}|^{p-1}|\nabla\phi|^{2}\phi^{2p-1}dV_{t}, \ \ \ A_{4}(p) \ \ := \ \ \int|{\rm Rm}|^{p-1}|\nabla\phi|^{2}\phi^{2p-2}dV_{t}.
\end{eqnarray*}
Hence (\ref{2.43}) is equivalent to
\begin{equation}
    B(1)\leq CR(A_{1}(1)+A_{2}(1)+A_{4}(1)).\label{2.44}
\end{equation}
According to (\ref{2.11}), we arrive at
$$
B(1) \ \ = \ \ \int|\nabla F|^{2}\phi^{2}dV_{t} \ \ 
\leq \ \ \int\left[(\Delta-\partial_{t})|F|^{2}+CL^{2}|{\rm Rm}|+CL^{4}\right]\phi^{2}dV_{t}
$$
$$
\leq \ \ -\int\langle\nabla|F|^{2},\nabla\phi^{2}\rangle dV_{t}
-\int(\partial_{t}|F|^{2})\phi^{2}dV_{t}
+CR\int|{\rm Rm}|\phi^{2}dV_{t}+CR\int\phi^{2}dV_{t}
$$
$$
\leq \ \ \int(\nabla F\ast F\ast\phi\ast\nabla\phi)dV_{t}
+CRA_{1}(1)+CRA_{2}(1)
$$
$$
- \ \int\left[\partial_{t}(|F|^{2}\phi^{2}dV_{t})
-|F|^{2}\phi^{2}\partial_{t}dV_{t}\right].
$$
By (\ref{2.10}), we have
$$
\int|F|^{2}\phi^{2}\partial_{t}dV_{t}
\leq CR\int\phi^{2}dV_{t}=CRA_{2}(1)
$$
so that
$$
B(1) \ \ \leq \ \ C\int(|\nabla F|\phi)(|F||\nabla\phi|)dV_{t}
+CRA_{1}(1)+CRA_{2}(1)
-\frac{d}{dt}\left(\int|F|^{2}\phi^{2}dV_{t}\right)
$$
$$
\leq \ \ \frac{1}{2}\int|\nabla F|^{2}\phi^{2}dV_{t}
+CR(A_{1}(1)+A_{2}(1)+A_{4}(1))-\frac{d}{dt}
\left(\int|F|^{2}\phi^{2}dV_{t}\right)
$$
which implies (\ref{2.43}).
\end{proof}

Introduce
\begin{eqnarray}
D_{1}(p)&:=&\int|{\rm Rm}|^{p-1}|F|^{2}\phi^{2p}dV_{t},\label{2.45}\\
D_{2}(p)&:=&\int|{\rm Ric}|^{2}|{\rm Rm}|^{p-1}\phi^{2p}dV_{t}.\label{2.46}
\end{eqnarray}
Substituting (\ref{2.45}) into (\ref{2.40}) yields
\begin{equation}
    f(p)\leq R[A_{1}(p)+A_{2}(p)+A_{4}(p)]
    -\frac{d}{dt}[D_{1}(p)+(1+L^{2})A_{2}(p)].\label{2.47}
\end{equation}
Together (\ref{2.41}), (\ref{2.43}) and (\ref{2.47}), where $p=2m+1$ is odd, we get
\begin{eqnarray}
B(2m+1)&\leq&(CR)^{m}B(1)+\sum_{1\leq i\leq m}C(CR)^{i}f(2i+1)\nonumber\\
&\leq&C(CR)^{m}\left[R(A_{1}(1)+A_{2}(1)+A_{4}(1))
-\frac{d}{dt}D_{1}(1)\right]\nonumber\\
&&+ \ \sum_{1\leq i\leq m}C(CR)^{i}
\bigg[R(A_{1}(2i+1)+A_{2}(2i+1)+A_{4}(2i+1))\label{2.48}\\
&&- \ \frac{d}{dt}(D_{1}(2i+1)+(1+L^{2})A_{2}(2i+1))\bigg].\nonumber
\end{eqnarray}

\subsection{The evolution for $\boldsymbol{A_{1}}$: {\bf II}}\label{subsection2.6}

As in \cite{KMW, L1, LY}, we choose 
\begin{equation}
\Omega=B_{g}(x_{0},\rho/\sqrt{K}), \ \ \ 
\phi=\left(\frac{\rho/\sqrt{K}-d_{g}(x_{0},\cdot)}{\rho/\sqrt{K}}\right)_{+}\label{2.49}
\end{equation}
so that 
\begin{equation}
e^{-CKt}g\leq g(t)\leq e^{CKt}g, \ \ \ |\nabla_{g(t)}
\phi|_{g(t)}\leq e^{CKT}|\nabla_{g}\phi|_{g}
\leq\frac{\sqrt{K}e^{CKT}}{\rho}.\label{2.50}
\end{equation}

\begin{lemma}\label{l2.7} Under the condition (\ref{2.49}), we have
\begin{eqnarray}
    A_{1}(1)+A_{2}(1)+A_{4}(1)
   &\leq&\frac{1}{3}A_{1}(3)\nonumber\\
   &&+ \ C\left(1+\frac{K}{\rho^{2}}e^{CKT}\right){\rm Vol}_{g(t)}
    \left(B_{g}(x_{0},\rho/\sqrt{K})\right).\label{2.51}
\end{eqnarray}
\end{lemma}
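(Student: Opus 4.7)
The plan is to estimate $A_1(1)$, $A_2(1)$, $A_4(1)$ separately and then combine, using only the elementary bound on the cutoff $\phi$ provided by (\ref{2.49})--(\ref{2.50}) and one application of Young's inequality.

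First, since $0\le\phi\le 1$ and $\phi$ is supported in $\Omega=B_g(x_0,\rho/\sqrt{K})$, the term $A_2(1)=\int\phi^2\,dV_t$ is bounded trivially by $\mathrm{Vol}_{g(t)}(B_g(x_0,\rho/\sqrt{K}))$. For $A_4(1)=\int|\nabla\phi|^2\,dV_t$, I would invoke the pointwise gradient bound $|\nabla_{g(t)}\phi|_{g(t)}^2\le Ke^{CKT}/\rho^2$ from (\ref{2.50}), which, together with the support condition, yields $A_4(1)\le (Ke^{CKT}/\rho^2)\,\mathrm{Vol}_{g(t)}(B_g(x_0,\rho/\sqrt{K}))$.

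The only nontrivial step is $A_1(1)=\int|\mathrm{Rm}|\phi^2\,dV_t$, where I need to convert a linear power of curvature into a cubic power so that it can be absorbed into $A_1(3)$. For this I would apply Young's inequality with exponents $3$ and $3/2$ to the product $x\cdot 1$ with $x=|\mathrm{Rm}|\phi^2\ge 0$, namely
\begin{equation*}
|\mathrm{Rm}|\phi^2\;\le\;\frac{1}{3}\bigl(|\mathrm{Rm}|\phi^2\bigr)^3+\frac{2}{3}\;=\;\frac{1}{3}|\mathrm{Rm}|^3\phi^6+\frac{2}{3}.
\end{equation*}
Integrating over $\Omega$ (to which the support of $\phi$ is confined) gives $A_1(1)\le\frac{1}{3}A_1(3)+\frac{2}{3}\,\mathrm{Vol}_{g(t)}(B_g(x_0,\rho/\sqrt{K}))$. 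Note the key point is that the exponent $2p=6$ on $\phi$ in $A_1(3)$ is exactly three times the exponent $2$ on $\phi$ in $A_1(1)$, which is precisely what makes Young's inequality keep the cutoff inside the cubic term.

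Summing the three estimates and collecting the constants $2/3$, $1$, $Ke^{CKT}/\rho^2$ into a single factor $C(1+Ke^{CKT}/\rho^2)$ yields (\ref{2.51}). There is no real obstacle here; the lemma is a bookkeeping step to prepare the linear term $A_1(1)$ for absorption into the main $A_1(3)=\int|\mathrm{Rm}|^3\phi^6\,dV_t$ quantity that will be controlled later, with all dependence on the geometry outside the curvature hidden inside the volume factor of the fixed geodesic ball.
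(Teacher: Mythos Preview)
Your proposal is correct and follows essentially the same approach as the paper: the paper bounds $A_2(1)$ and $A_4(1)$ exactly as you do, and for $A_1(1)$ it applies Young's inequality with exponents $3$ and $3/2$ to $|{\rm Rm}|\phi^2$ against $1$, obtaining $\frac{1}{3}|{\rm Rm}|^3\phi^6+\frac{2}{3}$, then sums the three estimates.
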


\begin{proof} By Young's inequality, we have
\begin{eqnarray*}
A_{1}(1)&=&\int_{B_{g}(x_{0},\rho/\sqrt{K})}|{\rm Rm}|\phi^{2}dV_{t} \ \ \leq \ \ \int_{B_{g}(x_{0},\rho/\sqrt{K})}
\left(\frac{1}{3}|{\rm Rm}|^{3}\phi^{6}+\frac{1}{3/2}\right)dV_{t}\\
&\leq&\frac{1}{3}A_{1}(3)+{\rm Vol}_{g(t)}\left(B_{g}(x_{0},\rho/\sqrt{K})\right).
\end{eqnarray*}
Since $|\phi|\leq1$, it follows that
$$
A_{2}(1)\leq{\rm Vol}_{g(t)}\left(B_{g}(x_{0},\rho/\sqrt{K})\right).
$$
Using (\ref{2.50}) yields
$$
A_{4}(1)=\int_{B_{g}(x_{0},\rho/\sqrt{K})}|\nabla\phi|^{2}dV_{t}
\leq\frac{Ke^{CKT}}{\rho^{2}}{\rm Vol}_{g(t)}
\left(B_{g}(x_{0},\rho/\sqrt{K})\right).
$$
These estimates imply (\ref{2.51}).
\end{proof}

According to (\ref{2.37}) and (\ref{2.51}) we get
$$
\frac{d}{dt}\bigg[A_{1}(2m+1)
+\frac{C(1+L^{2})R(1+R)}{L^{2}}
D_{1}(2m+1)+C(1+L^{2})R(1+R)A_{2}(2m+1)
$$
$$
+ \ CRD_{2}(2m+1)+CR^{2}(1+R)(CR)^{m-1}D_{1}(1)
$$
$$
+ \ CR^{2}(1+R)\sum_{1\leq i\leq m-1}(CR)^{i}D_{1}(2i+1)
$$
$$
+ \ CR^{2}(1+R)\sum_{1\leq i\leq m-1}(CR)^{i}(1+L^{2})A_{2}(2i+1)\bigg]
$$
$$
\leq \ \ (CR)^{m+1}\bigg[A_{1}(2m+1)+A_{2}(2m+1)+A_{4}(2m+1)\bigg]
$$
$$
+ \ (CR)^{m+1}\left(1+\frac{K}{\rho^{2}}e^{CKT}\right)
{\rm Vol}_{g(t)}\left(B_{g}(x_{0},\rho/\sqrt{K})\right).
$$
Set
\begin{eqnarray}
U(m)&:=&A_{1}(2m+1)
+\frac{C(1+L^{2})R(1+R)}{L^{2}}
D_{1}(2m+1)\nonumber\\
&&+ \ C(1+L^{2})R(1+R)A_{2}(2m+1)\nonumber\\
&&+ \ CRD_{2}(2m+1)+CR^{2}(1+R)(CR)^{m-1}D_{1}(1)\label{2.52}\\
&&+ \ CR^{2}(1+R)\sum_{1\leq i\leq m-1}(CR)^{i}D_{1}(2i+1)\nonumber\\
&&+ \ CR^{2}(1+R)\sum_{1\leq i\leq m-1}(CR)^{i}(1+L^{2})A_{2}(2i+1).\nonumber
\end{eqnarray}
Then
\begin{eqnarray}
\frac{d}{dt}U(m)&\leq&(CR)^{m+1}
\left[U(m)+\frac{1}{C(1+L^{2}R(1+R)}U(m)
+A_{4}(2m+1)\right]\nonumber\\
&&+ \ (CR)^{m+1}\left(1+\frac{K}{\rho^{2}}e^{CKT}\right)
{\rm Vol}_{g(t)}\left(B_{g}(x_{0},\rho/\sqrt{K})\right).\label{2.53}
\end{eqnarray}
For $A_{4}(2m+1)$, one can derive
\begin{eqnarray}
A_{4}(2m+1)&=&\int_{B_{g}(x_{0},\rho/\sqrt{K})}
|{\rm Rm}|^{2m}|\nabla\phi|^{2}
\phi^{4m}dV_{t}\nonumber\\
&\leq&\frac{CK}{\rho^{2}}e^{CKT}
\int_{B_{g}(x_{0},\rho/\sqrt{K})}|{\rm Rm}|^{2m}\phi^{4m}dV_{t}\label{2.54}\\
&\leq&\frac{CK}{\rho^{2}}e^{CKT}
\int_{B_{g}(x_{0},\rho/\sqrt{K})}
\left[\frac{(|{\rm Rm}|^{2m}\phi^{4m})^{\frac{2m+1}{2m}}}{\frac{2m+1}{2m}}
+\frac{1}{2m+1}\right]dV_{t}\nonumber\\
&\leq&\frac{CK}{\rho^{2}}e^{CKT}
A_{1}(2m+1)+\frac{CK}{\rho^{2}}e^{CKT}
{\rm Vol}_{g(t)}\left(B_{g}(x_{0},\rho/\sqrt{K})\right).\nonumber
\end{eqnarray}
Plugging (\ref{2.54}) into (\ref{2.53}) implies
\begin{equation}
\frac{d}{dt}U(m)\leq(CR)^{m+1}
\left(1+\frac{K}{\rho^{2}}e^{CKT}\right)
\left[U(m)+{\rm Vol}_{g(t)}\left(B_{g}(x_{0},\rho/\sqrt{K})\right)\right].\label{2.55}
\end{equation}
As in \cite{KMW, L1, LY}, we can derive from (\ref{2.55}) the follolwing estimate
\begin{eqnarray}
U(m)&\leq& e^{\Xi T}\left[U(m)\bigg|_{t=0}+e^{CKT}{\rm Vol}_{g(t)}\left(B_{g}(x_{0},\rho/\sqrt{K})\right)\right]\nonumber\\
&\leq&e^{\Xi T}\left[U(m)\bigg|_{t=0}+e^{CKT}{\rm Vol}_{g}\left(B_{g}(x_{0},\rho/\sqrt{K})\right)\right]\label{2.56}
\end{eqnarray}
with
\begin{equation}
\Xi:=(CR)^{m+1}
\left(1+\frac{K}{\rho^{2}}e^{CKT}\right).\label{2.57}
\end{equation}
From the definition (\ref{2.32}) of $R$ and (\ref{2.52}) we find that
\begin{eqnarray*}
U(m)\bigg|_{t=0}&\leq&\int_{B_{g}(x_{0},\rho/\sqrt{K})}
|{\rm Rm}_{g}|^{2m+1}_{g}dV_{g}+CR^{2}\int_{B_{g}(x_{0},\rho/\sqrt{K})}|{\rm Rm}_{g}|^{2m}_{g}|F|^{2}_{g}dV_{g}\\
&&+ \ CR^{4}\int_{B_{g}(x_{0},\rho/\sqrt{K})}|{\rm Rm}_{g}|^{2m}_{g}dV_{g}
+(CR)^{m+2}\int_{B_{g}(x_{0},\rho/\sqrt{K})}|F|^{2}_{g}dV_{g}\\
&&+ \ CR\int_{B_{g}(x_{0},\rho/\sqrt{K})}|{\rm Ric}_{g}|^{2}_{g}|{\rm Rm}_{g}|^{2m}_{g}dV_{g}\\
&&+ \ (CR)^{m+2}
\sum_{1\leq i\leq m-1}\int_{B_{g}(x_{0},\rho/\sqrt{K})}|{\rm Rm}_{g}|^{2i}_{g}|F|^{2}_{g}dV_{g}\\
&&+ \ (1+L^{2})(CR)^{m+2}
\sum_{1\leq i\leq m-1}\int_{B_{g}(x_{0},\rho/\sqrt{K})}|{\rm Rm}_{g}|^{2i}_{g}dV_{g}\\
&\leq&\int_{B_{g}(x_{0},\rho/\sqrt{K})}|{\rm Rm}_{g}|^{2m+1}_{g}dV_{g}\\
&&+ \ (CR)^{m+3}\sum_{0\leq i\leq m}\int_{B_{g}(x_{0},\rho/\sqrt{K})}|{\rm Rm}_{g}|^{2i}_{g}dV_{g}.
\end{eqnarray*}
In particular
\begin{eqnarray}
\int_{B_{g}(x_{0},\rho/2\sqrt{K})}|{\rm Rm}_{g(t)}|^{2m+1}_{g(t)}dV_{g(t)}
&\leq&e^{\Xi T}\bigg[\int_{B_{g}(x_{0},\rho/\sqrt{K})}|{\rm Rm}_{g}|^{2m+1}_{g}dV_{g}\nonumber\\
&&+ \ (CR)^{m+3}\sum_{0\leq i\leq m}\int_{B_{g}(x_{0},\rho/\sqrt{K})}|{\rm Rm}_{g}|^{2i}_{g}dV_{g}\label{2.58}\\
&&+ \ e^{CRT}{\rm Vol}_{g}\left(B_{g}(x_{0},\rho/\sqrt{K})\right)\bigg].\nonumber
\end{eqnarray}
now with
$$
\Xi=(CR)^{m+1}
\left(1+\frac{R}{\rho^{2}}e^{CRT}\right).
$$
Setting
\begin{equation}
    \Lambda:=\max_{B_{g}(x_{0},\rho/\sqrt{K})}|{\rm Rm}_{g}|_{g}
\end{equation}
we obtain
$$
\int_{B_{g}(x_{0},\rho/2\sqrt{K})}|{\rm Rm}_{g(t)}|^{2m+1}_{g(t)}dV_{g(t)} \ \ 
\leq \ \ e^{\Xi T}{\rm Vol}_{g}\left(B_{g}(x_{0},\rho/\sqrt{K})\right)
$$
$$
\cdot\bigg[\Lambda^{2m+1}
+(CR)^{m+3}\sum_{0\leq i\leq m}\Lambda^{2i}
+e^{CRT}\bigg]
$$
$$
\leq \ \ e^{\Xi T}{\rm Vol}_{g}\left(B_{g}(x_{0},\rho/\sqrt{K})\right)
\left[(CR)^{m+3}
(\max\{\Lambda, 1\})^{2m+1}+e^{CRT}\right].
$$

For any nonnegative time-dependent function $H$, we set
$$
-\kern-11pt\int_{B_{g}(x_0,\rho/2\sqrt{K})}H\!\ dV_{g(t)}:=\frac{1}{{\rm Vol}_{g}(B_{g}(x_{0},\rho/2\sqrt{K}))}
    \int_{B_{g}(x_{0},\rho/2\sqrt{K})}H\!\ dV_{g(t)}
$$

\begin{lemma}\label{l2.8} One has
\begin{equation}
    \left(-\kern-11pt\int_{B_{g}(x_0,\rho/2\sqrt{K})}
    |{\rm Rm}_{g(t)}|^{2m+1}_{g(t)}dV_{g(t)}\right)^{\frac{1}{2m+1}}\leq C e^{\Xi(T+\rho)}\left[R^{2}\max\{\Lambda, 1\}+e^{CRT}\right].\label{2.60}
\end{equation}
\end{lemma}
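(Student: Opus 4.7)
The plan is to convert the pointwise-in-$t$ bound established immediately before the lemma,
\[
\int_{B_g(x_0, \rho/(2\sqrt{K}))} |{\rm Rm}_{g(t)}|^{2m+1}_{g(t)}\, dV_{g(t)} \leq e^{\Xi T}\, {\rm Vol}_g(B_g(x_0, \rho/\sqrt{K})) \bigl[(CR)^{m+3}(\max\{\Lambda, 1\})^{2m+1} + e^{CRT}\bigr],
\]
into an averaged bound by dividing through by ${\rm Vol}_{g(t)}(B_g(x_0, \rho/(2\sqrt{K})))$ and then extracting the $(2m+1)$-th root. The two volume ratios this introduces will be handled by (\ref{2.50}) on the $g(t)$-to-$g$ side, and by Bishop--Gromov volume comparison on the $g$-to-$g$ side.

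First, the metric equivalence (\ref{2.50}) in dimension $11$ gives
\[
{\rm Vol}_{g(t)}(B_g(x_0, \rho/(2\sqrt{K}))) \geq e^{-CKT}\, {\rm Vol}_g(B_g(x_0, \rho/(2\sqrt{K}))),
\]
converting the $g(t)$-volume of the smaller ball into a $g$-volume. Next, since the hypothesis $|{\rm Ric}_g|_g \leq K$ yields ${\rm Ric}_g \geq -K g$, the Bishop--Gromov comparison against the constant-curvature model produces
\[
\frac{{\rm Vol}_g(B_g(x_0, \rho/\sqrt{K}))}{{\rm Vol}_g(B_g(x_0, \rho/(2\sqrt{K})))} \leq C\, e^{C\rho},
\]
with the exponential factor in $\rho$ arising from the $\sinh^{10}$ profile of the model once the normalized radius is not small. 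Combining these two estimates absorbs the factor ${\rm Vol}_g(B_g(x_0, \rho/\sqrt{K}))$ at the cost of an extra $e^{CKT + C\rho}$ in the prefactor.

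Finally, I take the $(2m+1)$-th root, using $(a+b)^{1/(2m+1)} \leq a^{1/(2m+1)} + b^{1/(2m+1)}$ together with the observation that $(m+3)/(2m+1) \leq 2$ for $m \geq 1$, so that $(CR)^{(m+3)/(2m+1)} \leq CR^2$, and the trivial bound $e^{CRT/(2m+1)} \leq e^{CRT}$. Because $R \geq K$ and $\Xi \geq CR \geq CK$ directly from (\ref{2.57}) and the definition (\ref{2.32}) of $R$, the combined exponent $\Xi T + CKT + C\rho$ fits inside $\Xi(T+\rho)$ after adjusting the universal constant hidden in $\Xi$, yielding exactly (\ref{2.60}). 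The only genuinely non-routine step is the Bishop--Gromov application producing the $e^{C\rho}$ factor; the remainder is bookkeeping designed to squeeze the several constants into the compact form $Ce^{\Xi(T+\rho)}$ on the right-hand side.
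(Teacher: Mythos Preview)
Your approach is essentially the same as the paper's: divide the pre-lemma inequality by the volume of the smaller ball, control the resulting volume ratio by Bishop--Gromov, and then extract the $(2m+1)$-th root. One minor point: the dashed average is \emph{defined} in the paper with ${\rm Vol}_{g}$ (not ${\rm Vol}_{g(t)}$) in the denominator, so the paper simply divides by ${\rm Vol}_{g}(B_{g}(x_{0},\rho/2\sqrt{K}))$ and applies Bishop--Gromov directly, without ever passing through ${\rm Vol}_{g(t)}$ or invoking (\ref{2.50}). Your detour through ${\rm Vol}_{g(t)}$ is harmless (the extra $e^{CKT}$ is absorbed exactly as you say), but unnecessary. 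Your explicit bookkeeping for the root extraction, in particular $(m+3)/(2m+1)\le 2$, is correct and fills in a step the paper leaves implicit.
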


\begin{proof} Observe that
$$
\frac{1}{{\rm Vol}_{g}(B_{g}(x_{0},\rho/2\sqrt{K}))}
    \int_{B_{g}(x_{0},\rho/2\sqrt{K})}
    |{\rm Rm}_{g(t)}|^{2m+1}_{g(t)}dV_{g(t)}
$$
$$
\leq e^{\Xi T}\frac{{\rm Vol}_{g}(B_{g}(x_{0},\rho/\sqrt{K}))}{{\rm Vol}_{g}(B_{g}(x_{0},\rho/2\sqrt{K}))}
\left[(CR)^{m+3}(\max\{\Lambda, 1\})^{2m+1}
+e^{CRT}\right]
$$
$$
\leq C e^{\Xi (T+\rho)}\left[(CR)^{m+3}(\max\{\Lambda, 1\})^{2m+1}
+e^{CRT}\right]
$$
by the Bishop-Gromov volume comparision theorem.
\end{proof}

\section{Proof of Theorem \ref{t1.2}}\label{section3}

Suppose now $T_{\max}<+\infty$ and
$$
|{\rm Ric}_{g(t)}|_{g(t)}\leq K, \ \ \ |F(t)|_{g(t)}\leq L, \ \ \ |\nabla_{g(t)}F(t)|_{g(t)}\leq P \ \ \ \text{on} \ M\times[0,T_{\max}),
$$
for some constants $K$, $L$ and $P$. We write
$$
f:=Cu, \ \ \ u:=1+|{\rm Rm}_{g(t)}|^{2}_{g(t)}+|F(t)|^{2}_{g(t)}
    +|\nabla_{g(t)}F(t)|^{2}_{g(t)}
$$
in (\ref{2.12}) so that
\begin{equation}
    -\Box u\geq -fu.\label{3.1}
\end{equation}
According to Lemma \ref{l2.8}, we have
\begin{equation}
    -\kern-11pt\int_{B_{g}(x_0,\rho/2\sqrt{K})}f^{m+\frac{1}{2}}dV_{g(t)}\leq C(m,K, L, P, \rho, T_{\max},\Lambda)<+\infty\label{3.2}
\end{equation}
for any integer $m\geq1$. If we apply Lemma 19.1 in \cite{PeterLi} to (\ref{3.1}) together with (\ref{3.2}), as well as the proof in \cite{KMW, L1, LY}, we obtain particularly
$$
\max_{B_{g}(x_{0},\rho/4\sqrt{K})\times[0,T_{\max})}
|{\rm Rm}_{g(t)}|_{g(t)}\leq C'(m, K, L, P, \rho, T_{\max}, \Lambda)<+\infty
$$
Since $M$ is closed, it follows that
$$
\max_{M\times[0,T_{\max})}
|{\rm Rm}_{g(t)}|_{g(t)}\leq C'(m, K, L, P, \rho, T_{\max}, \Lambda)<+\infty
$$
which contradicts with (\ref{1.3}). Hence the conclusion (\ref{1.4}) holds.



\end{document}